\documentclass[11pt]{amsart}
\usepackage[english]{babel} 
\usepackage[latin1]{inputenc} 
\usepackage{amssymb,geometry,color,graphicx}
\usepackage{amsmath,amsthm} 
\usepackage{booktabs}
\usepackage{datetime}
\usepackage{dsfont}  
\usepackage{amstext} 
\usepackage{graphicx}   
\usepackage{fancyhdr} 
\usepackage{multirow}
\usepackage{latexsym}
\usepackage{mathrsfs}
\usepackage{listings}
\usepackage{cancel}
\lstset{
  basicstyle=\footnotesize,
  frame = single,
  numberstyle = \tiny,
  numbers = left,
  numbersep =8pt,
  xleftmargin=.25in,
  xrightmargin=.25in
  }

\usepackage{cases}
\usepackage{mathtools}
\usepackage{bm}
\usepackage{caption}
\usepackage{subcaption}
\usepackage{tikz}
\usepackage{hyperref}
\usepackage{enumitem}
\hypersetup{
    colorlinks=true, 
    linktoc=all,     
    linkcolor=blue,  
}

\usepackage{changes}
\definechangesauthor[name=Pawel, color=blue]{PP}

\geometry{tmargin=3cm,bmargin=4cm,lmargin=3.5cm,rmargin=3.5cm}
\hyphenation{}

\numberwithin{equation}{section} 


\newcommand{\R}{\mathbb{R}}

\newcommand{\N}{\mathbb{N}}

\newcommand{\A}{\mathcal{A}}
\newcommand{\de}{\mathrm{d}}

\newcommand{\n}{{(n)}}
\newcommand{\E}{{\mathbb{E}}}
\newcommand{\OB}{{\Omega_B}}

\newcommand{\EB}{{\mathbb{E}_B}}

\newcommand{\F}{{\mathcal{F}}} 
\renewcommand{\P}{{\mathbb{P}}} 
\newcommand{\PB}{{\mathbb{P}_B}}

\newcommand{\diff}[1]{\,\mathrm{d}#1}
\newcommand{\Cb}{\mathcal{C}_b^{\alpha,\beta}}

\newcommand{\cP}{\mathcal{P}}

\newcommand{\triple}{{\vert\kern-0.25ex\vert\kern-0.25ex\vert}}

\theoremstyle{plain}
\newtheorem{thm}{Theorem}[section]
\newtheorem{cor}[thm]{Corollary}
\newtheorem{prop}[thm]{Proposition}
\newtheorem{lem}[thm]{Lemma}

\newtheorem{rmk}[thm]{Remark}

\usepackage{soul}

\newcommand{\eps}{\varepsilon}

    \title[Randomised EM for SDEs with H\"older conditions]{Randomised Euler-Maruyama method for SDEs with H\"older continuous drift coefficient}
\author[J. Bao]{Jianhai Bao}
\address{Center for Applied Mathematics, Tianjin University, 300072 Tianjin, P.R. China}
\email{jianhaibao@tju.edu.cn}
\author[Y. Wu]{Yue Wu}
\address{Department of Mathematics and Statistics, University of Strathclyde, Glasgow, G1 1XH, UK}
\email{yue.wu@strath.ac.uk,corresponding author  }


\begin{document}
\begin{abstract}
     In this paper, we examine the performance of randomised Euler-Maruyama (EM) method for additive time-inhomogeneous SDEs with an irregular drift. In particular, the drift is assumed to be $\alpha$-H\"older continuous in time and bounded $\beta$-H\"older continuous in space with $\alpha,\beta\in (0,1]$. The strong order of convergence of the randomised EM in $L^p$-norm is shown to be $1/2+(\alpha \wedge (\beta/2))-\epsilon$ for an arbitrary $\epsilon\in (0,1/2)$, higher than the one of standard EM, which is $\alpha \wedge (1/2+\beta/2-\epsilon)$. The proofs highly rely on the stochastic sewing lemma, where we also provide an alternative proof when handling time irregularity for a comparison. 
    \newline\newline
{\bf MSC} (2020): 65C30, 65C05, 60H10, 60H35, 60L90 
\end{abstract}
\keywords{Randomised methods, Stochastic differential equations, stochastic sewing lemma}
\maketitle

\section{Introduction}\label{sec:intro}
It is widely known that the uniqueness of the solution to an ordinary differential equation subject to an H\"older continuous drift is not guaranteed. However, when regularised by Brownian motion (BM), the corresponding additive SDE ensures a unique solution \cite{Zvonkin1974, Veretennikov1981, Davie2007}.
Building on existing results regarding solution uniqueness, this paper focuses on the numerical approximation for the following $\mathbb{R}^d$-valued additive time-inhomogeneous SDE, 
\begin{equation}
\label{eq:SDE}
\begin{cases}
\mathrm{d}X(t)=f(t,X(t))\mathrm{d}t+B(t), & t\in [0,T], \\
x(0)=x_0\in \mathbb{R}^d, &
\end{cases}
\end{equation}
where $B=(B(t))_{0\leq t\leq T}$ is a $d$-dimensional BM on a probability space $(\Omega_B, \mathcal{F}^B,\mathbb{P}_B)$, and the drift coefficient $f:[0,T]\times \mathbb{R}^d\to \mathbb{R}^d$ is assumed to be $\alpha$-H\"older continuous in time and bounded $\beta$-H\"older continuous in space with $\alpha,\beta\in (0,1]$, i.e., 
\begin{align}\label{eqn: fholder}
    \begin{split}
           \|f\|_{\mathcal{C}_b^{\alpha,\beta}([0,T]\times \mathbb{R}^d; \mathbb{R}^d)}&:=\sup_{t\in [0,T],x\in \mathbb{R}^d}|f(t,x)|+\sup_{x\in \mathbb{R}^d,s\neq t}\frac{|f(t,x)-f(s,x)|}{|t-s|^\alpha}\\
           &\qquad+\sup_{t\in [0,T],x\neq y}\frac{|f(t,x)-f(t,y)|}{|x-y|^\beta}<\infty.
    \end{split}
\end{align}
We will use $\mathcal{C}_b^{\alpha,\beta}$ for short. For simplicity, let $T=1$.

The widely studied scheme for the SDE \eqref{eq:SDE} is the Euler-Maruyama method (EM) on a fixed stepsize $1/n$, $n\in \N$, with the numerical solution given by
\begin{equation}\label{eqn:emcontinuous}
    \bar{X}^{(n)}_t=x_0+\int_0^t f\big(\kappa_n(s),\bar{X}^{(n)}_{\kappa_n(s)}\big)\,\de s  +B(t),
\end{equation}
where $\kappa_n(s):=\lfloor   ns \rfloor /n $ with $\lfloor   a\rfloor $ representing the largest integer that does not exceed $a$. 
The convergence in probability to $X(t)$ in \eqref{eq:SDE} of $\bar{X}^{(n)}_t$ in \eqref{eqn:emcontinuous} is established in \cite{Gyongy1996}. In the case of $\alpha \in [1/2,1)$ and $\beta\in (0,1)$, the strong order of convergence in $L^p$-norm is first given at $\beta /2$ via a PDE approach \cite{Pamen2017}. The regularity of the solution to the associated Kolmogorov equation is leveraged in the proofs of main theorems in \cite{Pamen2017} so that Gr\"onwall inequality can be used. The order of convergence merely depends on the regularity of the numerical scheme $\bar{X}_t$ and the roughness of $f$ through two types of error terms:
\begin{equation}\label{eqn:introterm1}
   \mathbb{I}_1(\bar{X}^{(n)}):= \E\left[\sup_{s\leq u\leq t}\left|\int_{s}^u \left( f(r, \bar{X}_r^{(n)}) - f(\kappa_n(r), \bar{X}_{\kappa_n(r)}^{(n)}) \right)\,\de r\right|^p\right]
\end{equation}
and
\begin{equation}\label{eqn:introterm2}
    \mathbb{I}_2(\bar{X}^{(n)},g):=\E\left[\sup_{s\leq u\leq t}\left|\int_{s}^u g(r,\bar{X}^{\n}_{r}) \cdot  \left(f(r, \bar{X}_{r}^{(n)})- f(\kappa_n(r), \bar{X}_{\kappa_n(r)}^{(n)}) \right) \,\de r\right|^p\right],
\end{equation}
where $g\in \mathcal{C}^{0,1}_b([s,t]\times \R^d; \R^d)$, equipped with the norm
$$\|f\|_{\mathcal{C}_b^{0,1}([0,T]\times \mathbb{R}^d; \mathbb{R}^d)}:=\sup_{t\in [0,T],x\in \mathbb{R}^d}|f(t,x)|
          +\sup_{t\in [0,T],x\neq y}\frac{|f(t,x)-f(t,y)|}{|x-y|}.$$

 Because of the boundedness of $f$, the local regularity of the numerical scheme $\bar{X}_t$ is easily bounded in \cite{Pamen2017} by $$\|\bar{X}_t-\bar{X}_{\kappa_n(t)}\|_{L^p(\Omega_B;\R^d)}\leq C n^{-1/2}, $$
 where $C$ is a generic constant. Thus, for $\alpha \in [1/2,1)$ and $\beta\in (0,1)$,  both $\mathbb{I}_1$ and $\mathbb{I}_2$ are estimated via a straightforward calculation as follows
\begin{align}
    \begin{split}
          &\|f(r,\bar{X}_r)-f(\kappa_n(r),\bar{X}_{\kappa_n(r)})\|_{L^p(\Omega_B;\R^d)}\\
    &\leq  \|f\|_{\Cb} (|t-\kappa_n(t)|^{\alpha}+\|\bar{X}_t-\bar{X}_{\kappa_n(t)}\|^{\beta}_{L^p(\Omega_B;\R^d)})\\
    &\leq C\|f\|_{\Cb} (n^{-\alpha}+n^{-\beta/2})\leq C n^{-\beta/2},
    \end{split}
\end{align}
where the order of convergence $\beta/2$ shows up. 
Note that when $\alpha \in (0,1/2)$, the order of convergence through the method mentioned above would be $\alpha \wedge (\beta/2)$.

When SDE is time-homogeneous with $f$ being bounded $\beta$-H\"older, a recent work \cite{Butkovsky2021} shows that the EM method is able to converge to the exact solution at a strong order $(1+\beta)/2-\epsilon$, for an arbitrary $\epsilon\in (0,1/2)$, almost optimal in the sense of lower error bounds \cite{ellinger2025}. Though the analysis strategy is very different from \cite{Pamen2017}, heavily relying on stochastic sewing lemma \cite{Le2020}, it is still the term $\mathbb{I}_1(\bar{X})$ that determines the order of convergence. For this, the authors manage to show that the upper bound of $\mathbb{I}_1(B)$ is $C n^{-(1+\beta)/2+\epsilon}$ by utilising heat kernel estimates in stochastic sewing lemma and then applying Girsanov's theorem for $\mathbb{I}_1(\bar{X})$. Later, the error analysis strategy for EM method in \cite{Butkovsky2021} are extended to an additive SDE with a drift of Sobolev regularity \cite{Dareiotis2023} and an additive SDE with distributional drift \cite{Goudenege2023}. In the meanwhile, with the observation that the error of the EM scheme can be reduced to a quadrature problem (for instance, via a Zvonkin-type transformation \cite{Zvonkin1974}), the same order of convergence as \cite{Dareiotis2023} is achieved via a non-equidistant EM method when $f$ can be decomposed into two parts: one part is twice differentiable and another one is with Sobolev regularity \cite{Neuenkirch2021}. Indeed, $\mathbb{I}_1$ and $\mathbb{I}_2$ are two types of quadrature errors.

We want to stress that, the upper bounds for $\mathbb{I}_1$ and $\mathbb{I}_2$ are limited to $Cn^{-\alpha}$ for time-inhomogeneous case, i.e., SDE \eqref{eq:SDE}.  As a consequence, the strong convergence of the EM method can not exceed $\alpha$, or more precisely, $\alpha \wedge ((\beta+1)/2-\epsilon)$. This motives us to examine the performance of randomised EM
method in this setting, which, to handle the low convergence due to time-irregular drift, artificially introduces randomness to the time variable and constructs martingale process from the random time.

In literature, for ODEs with $f\in \mathcal{C}^{\alpha, 1}$, where the boundedness condition is dropped, certain randomised Euler methods are introduced in \cite{Daun2011,Kruse2017,Heinrich2008} that converge with order $(\alpha+1/2)\wedge 1$, compared to the order $\alpha$ of Euler method. In particular, the key ingredients for the error analysis of the randomised Euler method in \cite{Kruse2017} (and Runge-Kutta method therein) is the randomised quadrature rule inspired by stratified Monte-Carlo method to approximate $\int_0^{j/n} g(r) \diff{r}$. The \emph{randomised quadrature} $Q_{\tau,n}^j[g]$ of
$\int_0^{j/n} g(r) \diff{r}$ with stepsize $1/n$ is given by 
\begin{align}
  \label{eq3:RandRie}
  Q_{\tau,n}^j[g] :=\int_0^{j/n} g(k^\tau_n(r))\,\de r = \frac{1}{n}\sum_{i = 0}^{j-1} g((i+ \tau_{i+1})/n ),\quad j \in
  \{1,\ldots,n\},
\end{align}
with $Q_{\tau,n}^0[g]=0$, where $k^\tau_n(r)=(\lfloor   nr \rfloor+\tau_{\lfloor   nr \rfloor})/n$ and $(\tau_i)_{i \in \N}$ is an independent family of
standard uniformly-distributed random variables on a probability space
$(\Omega_\tau,\F^\tau,\P_\tau)$. The important observation here is that the following error sequence with respect to $j\in \{0,1,\ldots, n\}$
\begin{equation}
    \int_0^{j/n} g(r) \diff{r}-\int_0^{j/n} g(k^\tau_n(r))\,\de r
\end{equation}
forms a discrete martingale such that the $L^p(\Omega_\tau)$-moments of the supremum of the error sequence can be bounded by the moments of its quadratic variation (see \cite[Theorem 3.1]{Kruse2017}). Thus, if $g$ is $\alpha$-H\"older continuous in time, one is able to gain an order $1/2+\alpha$ in $L^p(\Omega_\tau)$-sense, compared to an order $\alpha$ achieved by quantifying the error of \begin{equation}
    \int_0^{j/n} g(r) \diff{r}-\int_0^{j/n} g(k_n(r))\,\de r
\end{equation}
in the standard Euclidean norm.

In the stochastic setting, inspired by the randomised quadrature rule described above, both randomised Milstein method and randomised Euler-Maruyama method have been proposed to approximate solutions of different types of SDEs with time-irregular drift, including multiplicative SDEs \cite{Kruse2019a}, SDEs with jumps \cite{Przybylowicz2024}, stochastic delay differential equations \cite{Przybylowicz2024b} and McKean-Vlasov SDEs \cite{Biswas2024}. For the additive SDE \eqref{eq:SDE}, it is therefore evident to consider randomised EM method instead of EM method to achieve a higher order of convergence especially when $\alpha\in (0,1/2)$.

\subsection{The randomised Euler-Maruyama
method} 
For the introduction of the resulting \emph{randomised Euler-Maruyama
method},  let $(\tau_j)_{j \in \N}$ be an i.i.d.~family of
$\mathcal{U}(0,1)$-distributed random variables on an filtered
probability space $(\Omega_{\tau}, \F^{\tau}, (\F^{\tau}_j)_{j\in \N},
\P_{\tau})$, where $\F_j^{\tau}$ is the $\sigma$-algebra generated by
$\{\tau_1,\ldots, \tau_j\}$. Hereby, $\mathcal{U}(0,1)$ denotes the uniform 
distribution on the interval $(0,1)$. The random variables $(\tau_j)_{j \in
\N}$ represent the artificially added 
random input, which we assume
to be independent of the randomness already 
present in SDE \eqref{eq:SDE}. The resulting numerical method will then yield a discrete-time stochastic
process defined on the product probability space 
\begin{align}
  \label{eq:Omega}
  (\Omega, \F, \P) := (\Omega_{B}\times\Omega_{\tau}, \F^B\otimes \F^{\tau},
  \P_B \otimes \P_{\tau}).
\end{align}
Moreover, for each temporal stepsize $1/n$, $n\in \N$, the time grid for the numerical scheme is $\Pi_n:=\{t^n_j:=j/n, j= 0,1,\ldots, n\}$, and a 
discrete-time filtration $(\F^n_j)_{j \in \{0,\ldots,n\}}$
on $(\Omega, \F, \P)$ is defined as  
\begin{align}
  \label{eq:discretefiltration}
  \F^n_j := \F^B_{t^n_j} \otimes \F^{\tau}_{j+1}, \quad \text{ for } j \in
  \{0,1,\ldots,n\}.
\end{align}
We write $t_j$ instead of $t_j^n$ when there is no ambiguity. The randomised Euler-Maruyama approximation of SDE \eqref{eq:SDE} on $\Pi_n$ is given by
\begin{equation}\label{eqn:randomisedem}
    X^{(n)}_{t_j}=X^{(n)}_{t_{j-1}}+ f\big(t_{j-1}+\tau_j/n,X^{(n)}_{t_{j-1}}\big)/n+\Delta^n_j B,
\end{equation}
where $\Delta^n_j B:=B(t_j)-B(t_{j-1})$, with its continuous version 
\begin{equation}\label{eqn:randomisedemcontinuous}
    X^{(n)}_t=x_0+\int_0^t f\big(\kappa^\tau_n(s),X^{(n)}_{\kappa_n(s)}\big)\,\de s  +B_t,
\end{equation}
where we write $B(t)=B_t$ for short.
Note that continuous version of filtration is \begin{align}
  \label{eq:filtration}
  \F^n_t := \F^B_{t} \otimes \F^{\tau}_{\lfloor tn \rfloor+1}, \quad \text{ for } t \in
  (0,1].
\end{align}
Given the randomised scheme $X$ defined in \eqref{eqn:randomisedemcontinuous}, the key quadratic error terms $ \mathbb{I}_1$ and $ \mathbb{I}_2$ become
\begin{equation}\label{eqn:introterm1_random}
   \mathbb{I}^\tau_1(X^{(n)}):= \E\left[\sup_{s\leq u\leq t}\left|\int_{s}^u \left( f(r, X_r^{(n)}) - f(\kappa^\tau_n(r), X_{\kappa_n(r)}^{(n)}) \right)\,\de r\right|^p\right]
\end{equation}
and
\begin{equation}\label{eqn:introterm2_random}
    \mathbb{I}^\tau_2(X^{(n)},g):=\E\left[\sup_{s\leq u\leq t}\left|\int_{s}^u g(r,X^{\n}_{r}) \cdot  \left(f(r, X_{r}^{(n)})- f(\kappa^\tau_n(r), X_{\kappa_n(r)}^{(n)}) \right) \,\de r\right|^p\right].
\end{equation}
\subsection{Our contribution and organisation} With the randomised EM defined in \eqref{eqn:randomisedemcontinuous}, the strong order of convergence can be lifted to $1/2+\gamma-\epsilon$ for arbitrary $\epsilon\in (0,1/2)$, where $\gamma=\alpha \wedge (\beta/2)$, see Theorem \ref{thm:main1}. The error analysis follows the PDE strategy proposed in \cite{Pamen2017} but the key terms $\mathbb{I}^\tau_1$ and $\mathbb{I}^\tau_2$ are estimated through two quadratic bounds (Proposition \ref{lem:qb1} and Proposition \ref{lem:qb2}) using the stochastic sewing lemma. Our contributions are therefore two-fold:
\begin{itemize}
    \item the strong order of convergence of randomised EM (Theorem \ref{thm:main1}) is almost optimal (see Remark \ref{rmk:optimal});
    \item It is the first time the analysis of randomised schemes involving stochastic sewing lemma (see the proofs of Proposition \ref{lem:qb1}, Lemma \ref{lem:qb3_discrete} and Proposition \ref{lem:qb2}). 
\end{itemize}
For a comparison, we also include in Appendix \ref{sec:proof} an alternative proof for Lemma \ref{lem:qb3_discrete}, which achieves the same upper bound by constructing a discrete martingale from the process of quadrature errors and quantifying it through the discrete-time version of the Burkholder-Davis-Gundy inequality. 

The paper is structured as follows: Section \ref{sec:notation} outlines the standard notations and main tools to develop later error analysis, including the discrete-time version of the Burkholder-Davis-Gundy inequality (Theorem \ref{th:discreteBDG}) and the stochastic sewing lemma (Theorem \ref{thm:Sewing-lemma}). In Section \ref{sec:lemmas}, we detail the well-posedness of the numerical solution \eqref{eqn:randomisedemcontinuous} and derive the upper bounds for $\mathbb{I}^\tau_1$ and $\mathbb{I}^\tau_2$ (Proposition \ref{lem:qb1} and Proposition \ref{lem:qb2}).  Section \ref{sec:erroranalysis} is devoted to the main error analysis (Theorem \ref{thm:main1}).
\section{Notation and preliminaries}
\label{sec:notation}

In this section we explain the notation that is used throughout
this paper. In addition, we also collect a few notations and standard results from stochastic
analysis, which are needed in later sections.

By $\N$ we denote the set of all positive integers, while $\N_0 := \N \cup
\{0\}$. As usual the set $\R$ consists of all real numbers. By $| \cdot |$ we
denote the Euclidean norm on the Euclidean space $\R^d$ for any $d \in \N$. In
particular, if $d = 1$ then $| \cdot |$ coincides with taking
the absolute value. Moreover, the norm $| \cdot |_{\mathcal{L}(\R^d)}$ denotes
the standard matrix norm on $\R^{d \times d}$ induced by the Euclidean
norm. Set 
$$\nabla \equiv D =\begin{bmatrix}
    \frac{\partial}{\partial x_1}\\
    \ldots\\
    \frac{\partial}{\partial x_d} 
\end{bmatrix},$$ 
$D^2=(\frac{\partial^2}{\partial x_i x_j})_{1 \leq i,j \leq d}$ and $\Delta=\sum_{i=1}^d \frac{\partial^2}{\partial x_i^2}$. We use $a\wedge b$ and $a\vee b$ to denote the the minimum and maximum of $a$ and $b$.

In the following we introduce some space of function:
\begin{itemize}
	\item $C_b([s,t]\times \R^d;\R^m)$ with $s<t$, the set of all bounded functions from $[s,t]\times \R^d $ to $\R^m$, equipped with the norm
    $$\|g\|_\infty:=\sup_{t\in [0,1], x\in \R^d} |g(t,x)|.$$
	\item $C_b^{\beta}(\R^d;\R^m),\,\beta \in (0,1]$, the set of all functions from $\R^d $ to $\R^m$ which are bounded $\beta$-H\"older continuous functions.
	\item $C([s,t];C_b^{\beta}(\R^d;\R^d))$ with $s<t$, the collection of $C_b^{\beta}(\R^d;\R^d)$-valued continuous function over time interval $[s,t]$, equipped with the norm $\|\cdot\|_{C_b^{\beta}([s,t])}$ defined by
	\begin{align*}
	\|g\|_{C_b^{\beta}([s,t])} := \sup_{r \in [s,t], x \in \R^d}|g(r,x)|+\sup_{r \in [s,t], x \neq y}\frac{|g(r,x)-g(r,y)|}{|x-y|^{\beta}}.
	\end{align*}
     Note that $C[s,t];C_b^{\beta}(\R^d;\R^d))$ coincide with $\mathcal{C}_b^{0,\beta}([s,t]\times \mathbb{R}^d; \mathbb{R}^d)$ mentioned in Section \ref{sec:intro}, where the former one will be adopted in the later analysis.
    	\item $C^1([s,t];C_b^{\beta}(\R^d;\R^d))$ with $s<t$, the collection of $C_b^{\beta}(\R^d;\R^d)$-valued continuous function $g$ over time interval $[s,t]$, such that $\frac{\partial }{\partial r}g(r,\cdot)\in C_b^{\beta}(\R^d;\R^d)$ exists, and is continuous for $r\in [s,t]$.
	\item $C_b^{2,\beta}(\R^d)$ with $\beta \in (0,1)$ denotes the space of twice differentiable functions $g:\R^d \to \R$ with $D^{\ell}g \in C_b^{\beta}(\R^d; \R^{\otimes\ell})$ for any $1 \leq \ell \leq 2$. A function $g:\R^d \to \R^d$ belongs to $C_b^{i,\beta}(\R^d;\R^d)$ if each it components is in $C_b^{2,\beta}(\R^d)$ for $j=1,\ldots,d$.
    	\item $\mathcal{C}_b^{\alpha,\beta}([s,t]\times \mathbb{R}^d; \mathbb{R}^d)$ with $\alpha,\beta\in (0,1]$, the collection of  continuous functions $g:[s,t]\times \mathbb{R}^d\to \mathbb{R}^d$ which are $\alpha$-H\"older continuous in time and $\beta$-H\"older continuous in space, equipped with norm defined in Eqn. \eqref{eqn: fholder}. We will use $\mathcal{C}_b^{\alpha,\beta}([s,t]\times \mathbb{R}^d)$ for $\mathcal{C}_b^{\alpha,\beta}([s,t]\times \mathbb{R}^d; \mathbb{R})$ and use $\mathcal{C}_b^{\alpha,\beta}$ when it is no confusion.
    \item For an arbitrary Banach space $(E, \|\cdot\|_E)$ and for a given measure space $(X, \mathcal{A}, \mu)$ the set
$L^p(Y; E) := L^p( Y, \mathcal{A}, \mu; E)$, $p \in [1,\infty)$, consists of all
(equivalence classes of) Bochner measurable functions $g \colon Y \to E$ with
\begin{align*}
  \| g \|_{L^p(Y;E)} := \Big( \int_Y \|g(y)\|_E^p \diff{\mu(y)}
  \Big)^{\frac{1}{p}} < \infty.
\end{align*}
If $(E, \| \cdot\|_E ) = (\R, |\cdot|)$ we use the abbreviation $L^p(Y) :=
L^p(Y;\R)$. If $(Y, \mathcal{A}, \mu) = (\Omega, \F, \P)$ is a probability
space, we usually write the integral with respect to the probability measure
$\P$ as
\begin{align*}
  \E[ Z ] := \int_\Omega Z(\omega) \diff{\P(\omega)}, \quad Z \in
  L^p(\Omega;E).
\end{align*}
In the case of the product probability space $(\Omega, \F, \P)$ introduced in
\eqref{eq:Omega} an application of Fubini's theorem shows that
\begin{align*}
  \E[Z] = \EB[\E_{\tau}[ Z]] = \E_{\tau}[\EB[Z]], \quad Z \in
  L^p(\Omega;E),
\end{align*}
where $\EB$ is the expectation with respect to $\PB$ and $\E_{\tau}$ with 
respect to $\P_{\tau}$.
\end{itemize}

In the following, we will present two important tools, discrete-time version of the
Burkholder-Davis-Gundy inequality and stochastic sewing lemma. 

\begin{thm}\cite{burkholder1966} 
  \label{th:discreteBDG}
  For each $p \in (1,\infty)$ there exist positive constants $c_p$ and $C_p$
  such that for every discrete-time martingale $(Y^n)_{n \in \N_0}$ and
  for every $n \in \N_0$ we have 
  $$c_p \big\| [Y]_{n}^{\frac{1}{2}} \big\|_{L^p(\Omega)}
  \leq \big\| \max_{j\in \{0,\ldots,n\} } |Y^{j}| \big\|_{L^p(\Omega)} 
  \leq C_p \big\| [Y]_{n}^{\frac{1}{2}} \big\|_{L^p(\Omega)},$$
  where $[Y]_n = |Y^{0}|^2 + \sum_{k=1}^{n} |Y^{k}-Y^{k-1}|^2$
  is the \emph{quadratic variation} of $(Y^n)_{n \in \N_0}$.
\end{thm}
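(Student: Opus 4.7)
The plan is to prove the two-sided bound by combining Burkholder's square function inequality for martingales with the classical good-lambda method of Burkholder--Davis--Gundy. Let $M_n := \max_{0 \leq j \leq n} |Y^j|$ denote the maximal function and $S_n := [Y]_n^{1/2}$ the square function; with $d_0 := Y^0$ and $d_k := Y^k - Y^{k-1}$ for $k \geq 1$ one has $Y^n = \sum_{k=0}^n d_k$ and $S_n^2 = \sum_{k=0}^n |d_k|^2$, which matches the convention in the theorem statement. The claim then reduces to the norm equivalence $\|M_n\|_{L^p(\Omega)} \asymp \|S_n\|_{L^p(\Omega)}$ with constants depending only on $p \in (1,\infty)$.

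The central step I would establish is a good-lambda inequality: for a universal constant $c > 0$ and every $\beta > 1$, $\delta \in (0,1)$, $\lambda > 0$,
$$ \P\bigl(M_n > \beta \lambda,\ S_n \leq \delta \lambda\bigr) \leq \frac{c\,\delta^2}{(\beta-1)^2}\, \P\bigl(M_n > \lambda\bigr). $$
Introducing the stopping times $\sigma := \inf\{k : |Y^k| > \lambda\}$ and $\rho := \inf\{k : S_k > \delta \lambda\}$ with respect to the natural filtration of $(Y^k)$, and considering the stopped difference $Z^k := Y^{k \wedge \rho} - Y^{k \wedge \sigma \wedge \rho}$, orthogonality of martingale increments gives $\E[|Z^n|^2] \leq \delta^2 \lambda^2$, while on the exceptional event one has $\sigma \leq n < \rho$ and $|Z^n| > (\beta - 1)\lambda$. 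A conditional Chebyshev estimate applied on $\{\sigma \leq n\}$ then produces the displayed bound.

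Next I would multiply the good-lambda inequality by $p\lambda^{p-1}$, integrate over $\lambda \in (0,\infty)$ using the layer-cake identity $\E[M_n^p] = p \int_0^\infty \lambda^{p-1}\, \P(M_n > \lambda)\,\de\lambda$, and apply the change of variables $\lambda \mapsto \beta\lambda$ to obtain
$$ \|M_n\|_{L^p(\Omega)}^p \leq \frac{c\,\beta^p \delta^2}{(\beta-1)^2}\, \|M_n\|_{L^p(\Omega)}^p + C_{p,\beta,\delta}\, \|S_n\|_{L^p(\Omega)}^p. $$
Choosing $\beta$ close to $1$ and $\delta$ small enough that the first coefficient is at most $1/2$, the absorption step yields the upper half $\|M_n\|_{L^p(\Omega)} \leq C_p \|S_n\|_{L^p(\Omega)}$. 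The lower half follows from the symmetric good-lambda bound $\P(S_n > \beta\lambda,\, M_n \leq \delta\lambda) \leq c\,\delta^2(\beta-1)^{-2} \P(S_n > \lambda)$, obtained by swapping the roles of $M$ and $S$ in the stopping-time construction, followed by the same absorption argument.

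The main obstacle will be making the good-lambda argument fully rigorous: on the event $\{M_n > \beta\lambda,\ S_n \leq \delta\lambda\}$ one must simultaneously isolate the jump of the stopped martingale responsible for $M_n$ exceeding $\beta\lambda$ and use the a priori bound $S_n \leq \delta\lambda$ to control the conditional second moment of that jump. This coupling is exactly what produces the quadratic factor $\delta^2(\beta-1)^{-2}$ that drives the absorption, and some care is required in handling the initial term $|Y^0|^2$ appearing in the non-standard convention of $[Y]_n$ used here. Once the good-lambda inequality is in place, the integration in $\lambda$, the symmetric reverse direction, and the bookkeeping showing that $c_p, C_p$ remain locally bounded for $p \in (1,\infty)$ are all routine.
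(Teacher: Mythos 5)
The paper does not prove this statement: it is quoted verbatim from Burkholder's 1966 paper, so there is no in-text argument to compare yours against. For the record, the cited proof for $p\in(1,\infty)$ goes through the boundedness of martingale transforms on $L^p$, followed by randomisation of the signs of the increments and Khintchine's inequality to produce the square function, and finally Doob's maximal inequality to pass from $\|Y^n\|_{L^p}$ to $\|\max_j|Y^j|\|_{L^p}$; no stopping times or good-lambda inequalities appear.

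Your good-lambda sketch has a genuine gap exactly at the point you flag as "the main obstacle", and it is not removable by routine care. The claim $\E[|Z^n|^2]\leq \delta^2\lambda^2$ is false. By orthogonality, $\E[|Z^n|^2]=\E\bigl[\sum_{\sigma<k\leq \rho\wedge n}|d_k|^2\bigr]$, and when $\rho\leq n$ this sum contains the increment $d_\rho$ at the instant the square function first crosses $\delta\lambda$. The definition of $\rho$ only guarantees $S_{\rho-1}\leq\delta\lambda$; the overshoot $|d_\rho|^2=S_\rho^2-S_{\rho-1}^2$ is completely uncontrolled (take a path with many small increments so that $\sigma<\rho$, followed by one huge increment: then $\E[|Z^n|^2]$ exceeds any multiple of $\delta^2\lambda^2$). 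The same overshoot problem reappears in your "symmetric" reverse inequality, where $|Z^n|$ is bounded by $|Y^{n\wedge\rho}|$ and the jump of $Y$ over the level $\delta\lambda$ at time $\rho$ is unbounded. This is precisely the known obstruction to running the naive good-lambda argument for the pair $(\max_j|Y^j|,\,[Y]_n^{1/2})$ in discrete time: the stopped square function is not predictable, so stopping cannot prevent the last increment from being large. Repairing it requires a genuinely new ingredient -- for example, working with the predictable (conditional) square function $s_n^2=\sum_k\E[|d_k|^2\mid\F_{k-1}]$ for which the stopping time can be taken one step ahead and there is no overshoot, together with a separate comparison of $s$ and $S$; or a Davis-type decomposition isolating the large increments; or abandoning good-lambda for $p\in(1,\infty)$ in favour of the martingale-transform/Khintchine route of the cited reference. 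Also, a smaller slip: on the exceptional event it is $\max_j|Z^j|$, not $|Z^n|$, that exceeds $(\beta-1-\delta)\lambda$ (the maximum of $|Y^j|$ need not be attained at $j=n$), so you need Doob's $L^2$ maximal inequality for $Z$ rather than Chebyshev on $Z^n$ alone; and in the absorption step you should fix $\beta>1$ and send $\delta\to0$ rather than send $\beta\to1$, since $(\beta-1)^{-2}$ blows up.
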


Define the simplex $\Delta_{S,T}:=\{(s,t)|S\leq s\leq t\leq T\}$.
\begin{thm} [{\cite[Theorem 2.4]{Le2020}}]           \label{thm:Sewing-lemma}
Consider a probability space $(\Omega, \mathcal{F},\{\mathcal{F}_t\}_{t\geq 0},\P)$. Let $p\geq 2$, $0\leq S\leq T$ and let $A_{\cdot,\cdot}$ be a function $\Delta_{S,T}\to L^p(\Omega;\R^d)$ such that for any $(s,t)\in\Delta_{S,T}$ the random vector $A_{s,t}$ is $\F_t$-measurable. Suppose that for some $\epsilon_1,\epsilon_2>0$ and $C_1,C_2$ the bounds
\begin{align}
   & \|A_{s,t}\|_{L^p(\Omega;\R^d)}  \leq C_1|t-s|^{1/2+\epsilon_1},\label{SSL1}
\\
&\|\E[\delta A_{s,u,t}|\F_s]\|_{L^p(\Omega;\R^d)}=:\|\E^s[\delta A_{s,u,t}]\|_{L^p(\Omega;\R^d)}  \leq C_2 |t-s|^{1+\epsilon_2},\label{SSL2}
\end{align}
hold for all $S\leq s\leq u\leq t\leq T$, where
$$\delta A_{s,u,t}:=A_{s,t}-A_{s,u}-A_{u,t}.$$
Then there exists a unique (up to modification) $\{\F_t\}$-adapted 
process $\A:[S,T]\to L^p(\Omega;\R^d)$ such that $\A_S=0$ and the following bounds hold for some positive constants $K_1,K_2$:
\begin{align}
&\|\A_t	-\A_s-A_{s,t}\|_{L^p(\Omega;\R^d)}  \leq K_1 |t-s|^{1/2+\epsilon_1}+K_2 |t-s|^{1+\epsilon_2},\quad (s,t)\in\Delta_{S,T},\label{SSL1 cA}
\\
&\|\E^s\big[\A_t	-\A_s-A_{s,t}\big]\|_{L^p(\Omega;\R^d)}  \leq K_2|t-s|^{1+\epsilon_2},\quad (s,t)\in\Delta_{S,T}\label{SSL2 cA}.
\end{align}
Moreover, there exists a positive constant $K$ depending only on $\epsilon_1,\epsilon_2$ and $d$ such that $\A$
satisfies the bound
\begin{equation}\label{SSL3 cA}
\|\A_t-\A_s\|_{L^p(\Omega;\R^d)}  \leq  KpC_1 |t-s|^{1/2+\epsilon_1}+KpC_2 |t-s|^{1+\epsilon_2},\quad (s,t)\in[S,T].
\end{equation}
\end{thm}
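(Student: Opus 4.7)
The plan is to follow the classical Riemann-sum construction of Gubinelli's deterministic sewing lemma, but augmented with a martingale argument that extracts the decisive $1/2$-order improvement from hypothesis \eqref{SSL1}. For a partition $\pi = \{s = r_0 < r_1 < \cdots < r_N = t\}$ of $[s,t]$, set $S^\pi_{s,t} := \sum_{i=0}^{N-1} A_{r_i, r_{i+1}}$. First I would show that along the dyadic refinements $(\pi_n)_n$ obtained by repeatedly bisecting each subinterval, the sequence $(S^{\pi_n}_{s,t})_n$ is Cauchy in $L^p(\Omega;\R^d)$ by rewriting $S^{\pi_{n+1}}_{s,t} - S^{\pi_n}_{s,t}$ as a sum of terms $-\delta A_{r_i, m_i, r_{i+1}}$ where $m_i$ is the inserted midpoint.

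The central step is to split each $\delta A$-term into its $\F_{r_i}$-conditional mean and a centered remainder and to control the two resulting sums separately. The sum of conditional means is handled directly by the triangle inequality in $L^p$ together with \eqref{SSL2}, giving a bound of order $C_2|t-s|\cdot(\mathrm{mesh}\,\pi_n)^{\epsilon_2}$. The sum of centered pieces forms a martingale difference sequence relative to $(\F_{r_i})_i$, since each term is $\F_{r_{i+1}}$-measurable and $\F_{r_i}$-conditionally mean zero, so the discrete-time Burkholder--Davis--Gundy inequality (Theorem \ref{th:discreteBDG}), combined with the triangle inequality in $L^{p/2}$, yields
\[
\Big\|\sum_i \big(\delta A_{r_i,m_i,r_{i+1}} - \E^{r_i}[\delta A_{r_i,m_i,r_{i+1}}]\big)\Big\|_{L^p(\Omega;\R^d)} \leq C_p \Big(\sum_i \|\delta A_{r_i,m_i,r_{i+1}}\|_{L^p(\Omega;\R^d)}^2\Big)^{1/2},
\]
which together with \eqref{SSL1} produces the decisive bound of order $C_p C_1 |t-s|^{1/2} \cdot (\mathrm{mesh}\,\pi_n)^{\epsilon_1}$. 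Summing these geometrically decaying estimates in $n$ establishes the Cauchy property, and the limit $\A_t - \A_s$ is $\F_t$-measurable since every $S^{\pi_n}_{s,t}$ is.

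With $\A$ in hand, telescoping $\A_t - \A_s - A_{s,t} = \lim_n(S^{\pi_n}_{s,t} - A_{s,t})$ from the trivial partition $\pi_0 = \{s,t\}$ and reusing the two estimates at each refinement level yields \eqref{SSL1 cA} and \eqref{SSL2 cA}; inequality \eqref{SSL3 cA} is then immediate by combining \eqref{SSL1 cA} with \eqref{SSL1} via triangle inequality. Uniqueness is standard: if $\A$ and $\tilde\A$ both satisfy the conclusions, the difference $(\A_t - \A_s) - (\tilde\A_t - \tilde\A_s)$ inherits the two-term super-linear bound with $A_{s,t}$ replaced by $0$, so telescoping along any partition of $[s,t]$ and letting the mesh vanish (again via BDG on the centered pieces) forces the difference to be zero.

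The main obstacle is the martingale step: one must carefully verify the adaptedness and centering properties needed to invoke BDG at each dyadic refinement, and then track how the constants in the geometric series depend on $p,\epsilon_1,\epsilon_2,d$ to obtain the precise linear-in-$p$ form of \eqref{SSL3 cA}. Conceptually, without hypothesis \eqref{SSL1} the BDG cancellation is unavailable and one would only recover the deterministic Gubinelli bound of order $|t-s|^{1/2}$ in place of $|t-s|^{1/2+\epsilon_1}$, so the entire argument hinges on correctly isolating the $\F_{r_i}$-conditionally centered martingale differences at precisely this point.
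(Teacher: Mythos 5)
The paper does not prove this statement: it is quoted verbatim from L\^e \cite{Le2020}, so there is no internal proof to compare against. Your sketch reproduces the standard argument of that reference and is correct --- dyadic Riemann sums, splitting each $\delta A_{r_i,m_i,r_{i+1}}$ into its $\F_{r_i}$-conditional mean (summed via the triangle inequality and \eqref{SSL2}) and a conditionally centred martingale-difference part (handled by the discrete BDG inequality together with the triangle inequality in $L^{p/2}$, which is exactly where the hypothesis $p\ge 2$ and the $\ell^2$-versus-$\ell^1$ gain of $|t-s|^{1/2}$ enter), followed by geometric summation over refinement levels. The one step you pass over silently is the additivity (Chen) relation for the limits $\lim_n S^{\pi_n}_{s,t}$, which must be checked before these two-parameter limits can legitimately be written as increments $\A_t-\A_s$ of a single adapted process; this is routine but is a genuine part of the proof.
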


\section{Quadratic estimates} \label{sec:lemmas}
\begin{lem}[Well-posedness]\label{lem:Xnestimate}
Consider the randomised numerical simulation \eqref{eqn:randomisedemcontinuous}. Suppose that $f$ is bounded, i.e., $f \in C_b([0,1]\times \R^d;\R^d)$. Then $(X^\n_t)_{0\leq t\leq 1}$ is well-defined, in the sense that for any $p> 0$, 
$$ \sup_{0\leq t\leq 1}\|X^\n_t\|_{L^p(\Omega; \R^d)}<\infty.$$

\end{lem}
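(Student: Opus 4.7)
The plan is to exploit the boundedness of $f$ to reduce the claim to a trivial pathwise estimate involving only $x_0$, a deterministic constant, and the supremum of Brownian motion, so that no Gr\"onwall-type argument is needed at all.

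First I would verify well-definedness at the discrete level. Setting $X^\n_{t_0}=x_0$ and recalling \eqref{eqn:randomisedem}, I would proceed by induction on $j$: given an $\F^n_{j-1}$-measurable $X^\n_{t_{j-1}}$, the variable $\tau_j$ is $\F^\tau_j$-measurable and hence $\F^n_{j-1}$-measurable in view of \eqref{eq:discretefiltration}, so $f(t_{j-1}+\tau_j/n,X^\n_{t_{j-1}})$ is $\F^n_{j-1}$-measurable as the composition of the Borel-measurable function $f$ with an $\F^n_{j-1}$-measurable argument. Adding the increment $\Delta^n_j B$, which is $\F^B_{t_j}$-measurable, yields an $\F^n_j$-measurable $X^\n_{t_j}$. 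The continuous-time interpolation \eqref{eqn:randomisedemcontinuous} is then well-defined as an adapted stochastic process.

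Second, from \eqref{eqn:randomisedemcontinuous}, for every $\omega\in\Omega$ and $t\in[0,1]$ the pathwise estimate
$$|X^\n_t|\leq |x_0|+\int_0^t\bigl|f(\kappa^\tau_n(s),X^\n_{\kappa_n(s)})\bigr|\,\de s+|B_t|\leq |x_0|+\|f\|_\infty+|B_t|$$
holds. Taking $L^p(\Omega;\R^d)$ norms and applying the triangle inequality gives
$$\|X^\n_t\|_{L^p(\Omega;\R^d)}\leq |x_0|+\|f\|_\infty+\|B_t\|_{L^p(\Omega;\R^d)},$$
and for each fixed $p>0$ the last term is bounded by a universal constant independent of $t\in[0,1]$ thanks to the Gaussian tails of Brownian motion. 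This yields the desired uniform bound.

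I do not anticipate any real obstacle: the crucial point is that the drift enters \eqref{eqn:randomisedemcontinuous} only through its value at a point, so boundedness of $f$ suffices and neither the H\"older regularity of $f$ nor the randomisation plays any role. The lemma serves merely as a routine preliminary ensuring that $X^\n$ sits in the right function spaces before the substantive quadratic estimates in Proposition \ref{lem:qb1} and Proposition \ref{lem:qb2} are carried out.
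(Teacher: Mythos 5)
Your proof is correct and follows essentially the same route as the paper: the paper subtracts $B$ to get $|X^\n_t-B_t|\leq |x_0|+t\|f\|_\infty$ pathwise and then adds back $\|B_t\|_{L^p}$, which is exactly your bound $|X^\n_t|\leq |x_0|+\|f\|_\infty+|B_t|$ in slightly different packaging (your extra measurability induction is a harmless addition). The only shared cosmetic caveat is that for $p\in(0,1)$ one should invoke the quasi-norm inequality $\E[|U+V|^p]\leq\E[|U|^p]+\E[|V|^p]$ rather than the triangle inequality, but finiteness follows either way.
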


\begin{proof}Define $Y^\n_t:=X^\n_t-B(t)$. Then \eqref{eqn:randomisedemcontinuous} becomes
$$Y^\n_t=x_0+\int_0^t f(s,Y^\n_s+B(s))\,\de s$$
    It holds from the boundedness of $f$ that $|Y^\n_t|\leq |x_0|+t\|f\|_\infty.$ Therefore,
    \begin{align*}
      \|X^\n_t\|_{L^p(\Omega; \R^d)}&\leq \|Y^\n_t\|_{L^p(\Omega; \R^d)}  +\|B(t)\|_{L^p(\Omega; \R^d)} <\infty.
    \end{align*}
\end{proof}

\begin{prop}[Quadratic bound 1]\label{lem:qb1}
Let $\alpha,\beta\in(0,1]$, $p>0$, $d,m\in \N$, and take $\epsilon\in(0,1/2)$.
Then
for all $g\in \Cb([0,1]\times \mathbb{R}^d;\mathbb{R}^d)$,
$0\leq s\leq t\leq 1$, $n\in\N$, there exists an constant $\bar{C}(p, d,\beta,\epsilon)$ such that
\begin{align}\label{eqn:qb1}
    \begin{split}
  &\Bigl\|\int_s^t (g(r,B_r)-g(\kappa_n^\tau(r),B_{\kappa_n(r)}))\, \de r\Bigr\|_{L^p(\Omega;\R^d)}\\
  &\leq \bar{C}(p, d,\beta,\epsilon)\|g\|_{\Cb} |t-s|^{1/2+\epsilon}n^{-(1/2+\gamma-\epsilon)},     
    \end{split}
\end{align}
where $\gamma=\alpha \wedge (\beta /2)$.
\end{prop}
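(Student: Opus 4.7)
\emph{Proof plan.} The plan is to apply the stochastic sewing lemma (Theorem \ref{thm:Sewing-lemma}) with respect to the product filtration $(\F^n_t)$ from \eqref{eq:filtration} and the germ
\[
A_{s,t}:=\E\!\left[\int_s^t\bigl(g(r,B_r)-g(\kappa_n^\tau(r),B_{\kappa_n(r)})\bigr)\diff{r}\,\Big|\,\F^n_s\right],
\]
which is $\F^n_s$-measurable by construction. The tower property immediately yields $\E^{\F^n_s}\delta A_{s,u,t}=0$ for every $s\le u\le t$, so hypothesis \eqref{SSL2} is trivially satisfied with $C_2=0$. The a priori Lipschitz-in-time bound $\|\int_s^t(g(r,B_r)-g(\kappa_n^\tau(r),B_{\kappa_n(r)}))\diff{r}\|_{L^p}\le 2\|g\|_\infty|t-s|$ coming from boundedness of $g$ allows one, via the uniqueness clause of Theorem \ref{thm:Sewing-lemma}, to identify the associated sewing process $\A_t$ with the genuine error $Z_t:=\int_0^t(g(r,B_r)-g(\kappa_n^\tau(r),B_{\kappa_n(r)}))\diff{r}$. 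The conclusion \eqref{SSL3 cA} then reduces the proof of \eqref{eqn:qb1} to verifying hypothesis \eqref{SSL1} in the sharp form $\|A_{s,t}\|_{L^p}\le \bar C\|g\|_{\Cb}n^{-(1/2+\gamma-\epsilon)}|t-s|^{1/2+\epsilon}$.

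I would establish this by decomposing
\[
g(r,B_r)-g(\kappa_n^\tau(r),B_{\kappa_n(r)})=\Phi^{\mathrm{sp}}_n(r)+\Phi^{\mathrm{tm}}_n(r),
\]
with $\Phi^{\mathrm{sp}}_n(r):=g(r,B_r)-g(r,B_{\kappa_n(r)})$ and $\Phi^{\mathrm{tm}}_n(r):=g(r,B_{\kappa_n(r)})-g(\kappa_n^\tau(r),B_{\kappa_n(r)})$, and estimating the two contributions to $A_{s,t}$ separately. For $\Phi^{\mathrm{sp}}_n$ the randomness of $\tau$ plays no role, so $\E^{\F^n_s}\Phi^{\mathrm{sp}}_n(r)=\E[\Phi^{\mathrm{sp}}_n(r)\,|\,\F^B_s]$ reduces to the heat-kernel difference $P_{r-s}g(r,\cdot)(B_s)-P_{\kappa_n(r)-s}g(r,\cdot)(B_s)$. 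The second-order heat-kernel estimate $\|\Delta P_u h\|_\infty\le C\|h\|_{C_b^\beta}u^{-1+\beta/2}$, combined with a crude pointwise bound on the boundary regime $r-s\le 1/n$, then delivers a contribution of size $\|g\|_{\Cb}n^{-(1/2+\beta/2-\epsilon)}|t-s|^{1/2+\epsilon}$, as in the SSL-based argument of \cite{Butkovsky2021}.

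For $\Phi^{\mathrm{tm}}_n$ the randomisation of time is the essential new ingredient. Since $\tau_i\sim\mathcal{U}(0,1)$ is independent of the Brownian motion, the change of variable $r\mapsto\kappa_n^\tau(r)$ is unbiased inside each grid cell: for every complete cell $[t_{i-1},t_i]\subset[s,t]$,
\[
\E_\tau\!\int_{t_{i-1}}^{t_i}\Phi^{\mathrm{tm}}_n(r)\diff{r}=\int_{t_{i-1}}^{t_i}g(r,B_{t_{i-1}})\diff{r}-\frac{1}{n}\!\int_0^1 g\bigl(t_{i-1}+\tfrac{u}{n},B_{t_{i-1}}\bigr)\diff{u}=0.
\]
Hence the partial sums of these cell contributions form a discrete $(\F^n_{t_i})$-martingale with increments of $L^p$-size $\lesssim\|g\|_{\Cb}n^{-(1+\alpha)}$ by the $\alpha$-H\"older regularity in time, and the discrete Burkholder-Davis-Gundy inequality (Theorem \ref{th:discreteBDG}) produces a bound of order $\|g\|_{\Cb}n^{-(1/2+\alpha)}|t-s|^{1/2}$ in the regime $|t-s|\ge 1/n$; in the complementary regime $|t-s|\le 1/n$ the interval spans at most two cells and the direct H\"older bound $\|g\|_{\Cb}n^{-\alpha}|t-s|$ suffices. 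Both are of the required form $\|g\|_{\Cb}n^{-(1/2+\alpha-\epsilon)}|t-s|^{1/2+\epsilon}$ (via $|t-s|^{1/2}\le|t-s|^{1/2+\epsilon}n^\epsilon$ in the first regime), and summing with the spatial contribution yields \eqref{SSL1} with rate $n^{-(1/2+\gamma-\epsilon)}$.

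The main technical obstacle I foresee is the careful bookkeeping of the two filtrations $\F^B$ and $\F^\tau$ inside $\F^n$: every conditional-expectation estimate must respect which $\tau_i$'s are already revealed by $\F^n_s$. The spatial heat-kernel estimate has to be performed under $\E[\cdot\,|\,\F^B_s]$, which is legitimate thanks to the independence of $B$ from $\tau$ granted by the product structure \eqref{eq:Omega}; the randomised-quadrature martingale has to be constructed after conditioning on the full Brownian trajectory; and the incomplete cell containing $s$ needs separate handling, because the corresponding $\tau_{\lfloor sn\rfloor+1}$ is already $\F^n_s$-measurable, so the unbiasedness of $\tau$ cannot be exploited there.
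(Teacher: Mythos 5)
Your proposal is correct and follows essentially the same route as the paper: the stochastic sewing lemma applied to the conditional-expectation germ $A_{s,t}=\E^{s}\bigl[\int_s^t(g(r,B_r)-g(\kappa_n^\tau(r),B_{\kappa_n(r)}))\,\de r\bigr]$, with \eqref{SSL2} trivial by the tower property, identification of the sewing with the genuine error via the crude $O(|t-s|)$ bound, and a split of \eqref{SSL1} into a spatial part (heat-kernel estimates as in \cite{Butkovsky2021}, plus a crude bound on the near-diagonal cells $r-s\lesssim 1/n$) and a temporal part exploiting the unbiasedness of $\tau_i\sim\mathcal{U}(0,1)$. The one place you diverge is the temporal part of the \emph{germ} estimate: you pass through the discrete Burkholder--Davis--Gundy inequality to get $O(n^{-(1/2+\alpha)}|t-s|^{1/2})$, whereas the paper observes (its Eqn.~\eqref{eqn:randomtrick}) that the $\F^n_s$-conditional expectation annihilates every complete cell \emph{exactly} — since the corresponding $\tau_i$ is independent of $\F^n_s$ — so only the boundary cells survive and $\|A_{s,t}^{\mathrm{tm}}\|_{L^p}\le \|g\|_{\Cb}n^{-(1+\alpha)}$ with no martingale inequality needed at the germ level. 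Both bounds are of the admissible form $n^{-(1/2+\alpha-\epsilon)}|t-s|^{1/2+\epsilon}$ once $1/n\le|t-s|$, so your argument closes; the BDG route is simply redundant here (the paper reserves it for the alternative, non-sewing proof of its Lemma~\ref{lem:qb3_discrete} in Appendix~\ref{sec:proof}). One small caution you partly anticipate: with the paper's convention $\F^n_j=\F^B_{t_j}\otimes\F^\tau_{j+1}$, the cell increment over $[t_{j},t_{j+1})$ is already $\F^n_{j}$-measurable, so the martingale for the BDG step must be taken with respect to $\F^B_{t_j}\otimes\F^\tau_{j}$ (equivalently, conditionally on the Brownian path, with respect to $\F^\tau_j$), not $(\F^n_{t_i})$ as written.
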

\begin{proof}
It suffices to prove the bound for $p\geq 2$.
Define for $0\leq s\leq t\leq 1$
$$
A_{s,t}:=\E^s\left[ \int_s^t (g(r,B_r)-g(\kappa^\tau_n(r),B_{\kappa_n(r)}))\, \de r\right].
$$
For any $0\leq s\leq u\leq t\leq 1$
\begin{align*}
\delta A_{s,u,t}&=A_{s,t}-A_{s,u}-A_{u,t}\\
&=\E^s \left[\int_u^t (g(r,B_r)-g(\kappa_n^\tau(r),B_{\kappa_n(r)}))\, \de r\right]\\
&\quad-\E^u \left[\int_u^t(g(r,B_r)-g(\kappa^\tau_n(r),B_{\kappa_n(r)}))\, \de r\right].
\end{align*}
Let us check that all the conditions of the stochastic sewing lemma (Theorem~\ref{thm:Sewing-lemma}) are satisfied.

The condition \eqref{SSL2} trivially holds with $C_2=0$ because of 
\begin{equation*}
\E^s [\delta A_{s,u,t}]=0,
\end{equation*}
by the property of conditional expectation.

Denote $k_2:=\lfloor tn\rfloor \in \N$.  To establish \eqref{SSL1}, let $s \in [k/n, (k+1)/n)$ for some $k \in \{0,\hdots,k_2-1\}$.
Suppose first that $t \in [(k+2)/n, 1]$ so that $(k+2)/n-s>n^{-1}$. We write
\begin{align}\label{eqn:Astdec}
    \begin{split}
 |A_{s,t}|&\leq  \int_s^{(k+2)/n}
  \left|\E^s \big[ g(r,B_r)-g(\kappa_n^\tau(r),B_{\kappa_n(r)})\big]\right|\, \de r\\
  &\quad +\left|\E^s \left[\int^t_{(k+2)/n}
  \big(g(r,B_{\kappa_n(r)})-g(\kappa_n^\tau(r),B_{\kappa_n(r)})\big)\, \de r \right] \right|\\
    &\quad + \int^t_{(k+2)/n}
  \left|\E^s \big[ g(r,B_r)-g(r,B_{\kappa_n(r)})\big]\right|\, \de r\\
  &=:I_1+I_2+I_3.       
    \end{split}
\end{align}

The bound for $I_1$ is straightforward: by conditional Jensen's inequality and the first estimate of Proposition \ref{prop:fractional} we have
\begin{align}\label{eqn:AstI1}
\begin{split}
  \| I_1\|_{L^p(\Omega; \R^d)} &\leq
\int_s^{(k+2)/n} \| g(r,B_r)-g(\kappa^\tau_n(r),B_{\kappa_n(r)}) \|_{L^p(\Omega;\R^d)} \, \de r
\\
&\leq 2\bar{C}(p,d,\beta)
\|g\|_{\Cb} (1/n)^{1+(\alpha \wedge (\beta /2))} \\
&\leq 2\bar{C}(p,d,\beta)\|g\|_{\Cb} (1/n)^{(1/2+(\alpha \wedge (\beta /2))-\epsilon}|t-s|^{1/2+\epsilon},  
\end{split}
\end{align}
where the last inequality follows from the fact that $1/n\leq|t-s|$.

Regarding $I_2$, note that for each $k_1\in \{k+2,\ldots, k_2-1\}$,
\begin{align}\label{eqn:randomtrick}
\begin{split}
        &\E^s \left[\int^{(k_1+1)/n}_{k_1/n}
  \big(g(r,B_{\kappa_n(r)})-g(\kappa_n^\tau(r),B_{\kappa_n(r)})\big)\, \de r\right]\\
  &=\E^s_B \left[\E^{s}_\tau\left[\int^{(k_1+1)/n}_{k_1/n}
  g(r,B_{k_1/n})\, \de r-\frac{1}{n}f\big((k_1+\tau_{1+k_1})/n,B_{k_1/n}\big)\right]\right]\\
&=\E^s_B \left[\int^{(k_1+1)/n}_{k_1/n}
  \big(g(r,B_{k_1/n})\, \de r-\frac{1}{n}\E_\tau^{s}\left[f\big((k_1+\tau_{1+k_1})/n,B_{k_1/n}\big)\right]\right]=0.
\end{split}
\end{align}
Thus
\begin{align*}
 I_2&=\left|\E^s \left[\int^t_{k_2/n}
  \big(g(r,B_{\kappa_n(r)})-g(\kappa_n^\tau(r),B_{\kappa_n(r)})\big)\, \de r \right] \right|  \\
  &\leq \int^t_{k_2/n}
  \left|\E^s \big[ g(r,B_{\kappa_n(r)})-g(\kappa_n^\tau(r),B_{\kappa_n(r)})\big]\right|\, \de r.
\end{align*}
Similar to the estimate in \eqref{eqn:AstI1},
\begin{align}\label{eqn:AstI2}
    \begin{split}
    \| I_2\|_{L^p(\Omega;\R^d)}  
&\leq \|g\|_{\Cb} n^{-(1+\alpha) }\leq \|g\|_{\Cb} n^{-(1/2+\alpha-\epsilon)}|t-s|^{1/2+\epsilon}.
    \end{split}
\end{align}

Regarding $I_3$, the argument is exactly the same as the proof of \cite[Lemma 4.3]{Butkovsky2021} . Using the first estimate of Proposition \ref{prop:fractional}, we derive
\begin{align}\label{eqn:AstI3}
\begin{split}
I_3\le&\int_{(k+2)/n}^t|\cP_{|r-s|}g(r,\E^s[B_r])-\cP_{|\kappa_n(r)-s|}g(r,\E^s[B_r])|\,\de r
\\
&+\int_{(k+2)/n}^t \left|\cP_{|\kappa_n(r)-s|}\left(g(r,\E^s[B_r])-g(r,\E^s[B_{\kappa_n(r)}])\right)\right|\,\de r\\
=:&I_{31}+I_{32}.   
\end{split}
\end{align}
To bound $I_{31}$, we apply
estimate \eqref{eqn:A2difference} of Proposition \ref{prop:HK}  with $\eta=0$, $\delta=1$ and the fourth estimate of Proposition \ref{prop:fractional}. We get
\begin{align}\label{eqn:AstI31}
\begin{split}
    &\|I_{31}\|_{L^p(\Omega;\R^d)}\\
    &\leq \bar{C}(d,p,\beta)\| g\|_{C^{\alpha,\beta}} \int_{(k+2)/n}^t\big(|r-s|-|\kappa_n(r)-s|\big)|\kappa_n(r)-s|^{\beta/2-1}\,\de r\\
&\leq\bar{C}(d,p,\beta)\| g\|_{\Cb} n^{-1}\int_{(k+2)/n}^t|r-1/n-s|^{\beta /2-1}\,\de r\\
&=\bar{C}(d,p,\beta)\| g\|_{\Cb} n^{-1}\int_{(k+1)/n-s}^{t-1/n-s}|\bar{r}|^{\beta /2-1}\,\de \bar{r}\\
&\leq 2\beta^{-1}n^{-1}\bar{C}(d,p,\beta)\| g\|_{\Cb}|t-s|^{\beta /2},
\end{split}
\end{align}
where $(k+1)/n-s>0$, making the last second integral well-defined.

Regarding $I_{32}$,  we use \eqref{eqn:A2single} of Proposition~\ref{prop:HK} with $\eta=1$ and the last estimate of Proposition~\ref{prop:fractional}. We deduce
\begin{align}\label{eqn:AstI32}
\begin{split}
    &\|I_{32}\|_{L^p(\Omega;\R^d)}\\
    &
\leq \bar{C}(d,p,\beta)\| g\|_{\Cb}\int_{(k+2)/n}^t \|\E^s[B_r]-\E^s[B_{\kappa_n(r)}]\|_{L^p(\Omega)}|\kappa_n(r)-s|^{\beta/2-1/2}\,\de r
\\
&\leq 2\beta^{-1}n^{-1}\bar{C}(d,p,\beta)\| g\|_{\Cb}|t-s|^{\beta /2}.
\end{split}
\end{align}
Combining \eqref{eqn:AstI31} and \eqref{eqn:AstI32}, and taking again into account that $n^{-1}\leq|t-s|$, we get
\begin{equation}
   \| I_3\|_{L^p(\Omega;\R^d)} \leq \bar{C}(d,p,\beta)\| g\|_{C^{\alpha,\beta}} n^{-(1/2+\beta /2-\epsilon)}|t-s|^{1/2+\epsilon}. 
\end{equation}
The case of $t\in [k/n,(k+2)/n)$ can be dealt with easily so we omit it here.

Recalling \eqref{eqn:AstI1} and \eqref{eqn:AstI2}, we finally conclude
\begin{equation}
\| A_{s,t}\|_{L^p(\Omega;\R^d)}\leq \bar{C}(d,p,\beta)\| g\|_{C^{\alpha,\beta}} n^{-(1/2+\gamma-\epsilon)}|t-s|^{1/2+\epsilon}.
\end{equation}

Thus all the conditions of the stochastic sewing lemma are satisfied.
The process
$$
\tilde \A_t:=\int_0^t  (g(r,B_r)-g(\kappa^\tau_n(r),B_{\kappa_n(r)}))\,\de r
$$
is also $\F_t$-adapted, satisfies \eqref{SSL2 cA} trivially, and
$$
\| \tilde{\mathcal{A}}_t-\tilde{\mathcal{A}}_s-A_{s,t}\|_{L^p(\Omega;\R^d)} \leq \bar{C}(d,p,\beta) |t-s|^{1/2+\epsilon},
$$
which shows that it also satisfies \eqref{SSL1 cA}.
Therefore by uniqueness $\A_t=\tilde \A_t$, the bound \eqref{SSL3 cA} then yields precisely \eqref{eqn:qb1}.
\end{proof}
\begin{lem}
    [Quadratic bound 2]\label{lem:qb3_discrete}
Let $\alpha,\beta\in(0,1]$, $p>0$, and take $\epsilon\in (0,1/2)$.
Then
for all $g_1\in \mathcal{C}_b^{\alpha,\beta}([0,1]\times \R^d)$ and $g_2\in C_b([0,1]\times \R^d)$,
$0\leq s\leq t\leq 1$, $n\in\N$, there exists an constant $\bar{C}(p, \epsilon)$ such that
\begin{align}\label{eqn:errRie}
    \begin{split}
  &\Bigl\|\int_s^t \big(g_1(r,B_{\kappa_n(r)})-g_1(\kappa_n^\tau(r),B_{\kappa_n(r)})\big)g_2(\kappa_n(r),B_{\kappa_n(r)})\, \de r\Bigr\|_{L^p(\Omega)}\\
  &\leq   \bar{C}(p, \epsilon)\|g_1\|_{\mathcal{C}^{\alpha,\beta}_b}\|g_2\|_{\infty} |t-s|^{1/2+\epsilon}n^{-(1/2+\alpha-\epsilon)}. 
    \end{split}
\end{align}
\end{lem}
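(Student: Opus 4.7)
The plan is to apply the stochastic sewing lemma (Theorem~\ref{thm:Sewing-lemma}) in the same spirit as Proposition~\ref{lem:qb1}, but exploiting a crucial simplification: the spatial argument of $g_1$ is frozen at $B_{\kappa_n(r)}$, so only the time variable oscillates and we do not need any heat‐kernel smoothing. Set
\[
\phi(r):=\bigl(g_1(r,B_{\kappa_n(r)})-g_1(\kappa_n^\tau(r),B_{\kappa_n(r)})\bigr)g_2(\kappa_n(r),B_{\kappa_n(r)}),
\]
and define, for $0\le s\le t\le 1$, $A_{s,t}:=\E^s\bigl[\int_s^t\phi(r)\,\de r\bigr]$. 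Condition \eqref{SSL2} then holds trivially with $C_2=0$, since the tower property gives $\E^s[\delta A_{s,u,t}]=0$ exactly as in the proof of Proposition~\ref{lem:qb1}.

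The main work is the $L^p$-bound on $A_{s,t}$. I would write $k:=\lfloor ns\rfloor$, $k_1:=k+1$, $k_2:=\lfloor nt\rfloor$ and, assuming $k_1\le k_2$, split
\[
A_{s,t}=\E^s\!\int_s^{k_1/n}\phi(r)\,\de r+\sum_{j=k_1}^{k_2-1}\E^s\!\int_{j/n}^{(j+1)/n}\phi(r)\,\de r+\E^s\!\int_{k_2/n}^{t}\phi(r)\,\de r.
\]
For every full block $j\ge k_1$, the random variable $\tau_{j+1}$ is independent of $\F^n_s$, and the spatial part $g_2(j/n,B_{j/n})$ is $\F^B_{j/n}$-measurable. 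Imitating the randomised-quadrature cancellation in \eqref{eqn:randomtrick}, conditioning first on $B$ and using the change of variables $r=(j+u)/n$ gives
\[
\E^s_\tau\!\int_{j/n}^{(j+1)/n}\!\!g_1(r,B_{j/n})\,\de r=\tfrac1n\!\int_0^1\!g_1\bigl((j+u)/n,B_{j/n}\bigr)\de u=\E^s_\tau\!\bigl[\tfrac1n g_1((j+\tau_{j+1})/n,B_{j/n})\bigr],
\]
so every full-block contribution vanishes. The two boundary terms have length at most $1/n$, and since $|r-\kappa_n^\tau(r)|\le 1/n$ we obtain the pathwise bound $|\phi(r)|\le \|g_1\|_{\mathcal{C}_b^{\alpha,\beta}}\|g_2\|_\infty n^{-\alpha}$. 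Hence
\[
\|A_{s,t}\|_{L^p(\Omega)}\le 2\|g_1\|_{\mathcal{C}_b^{\alpha,\beta}}\|g_2\|_\infty n^{-1-\alpha}.
\]
Because $k_1\le k_2$ forces $1/n\le|t-s|$, I rewrite $n^{-1-\alpha}=n^{-(1/2+\alpha-\epsilon)}\,n^{-(1/2+\epsilon)}\le n^{-(1/2+\alpha-\epsilon)}|t-s|^{1/2+\epsilon}$, which is exactly \eqref{SSL1} with $C_1=\bar C(p)\|g_1\|_{\mathcal{C}_b^{\alpha,\beta}}\|g_2\|_\infty n^{-(1/2+\alpha-\epsilon)}$ and $\epsilon_1=\epsilon$. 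The degenerate case $k_1>k_2$, i.e.\ $[s,t]$ sits inside one subinterval and $|t-s|<1/n$, is handled by the trivial bound $|\int_s^t\phi|\le\|g_1\|_{\mathcal{C}_b^{\alpha,\beta}}\|g_2\|_\infty n^{-\alpha}|t-s|$, which is absorbed into the same right-hand side using $|t-s|^{1/2-\epsilon}\le n^{-(1/2-\epsilon)}$.

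To conclude, I follow Proposition~\ref{lem:qb1}: the candidate $\tilde\A_t:=\int_0^t\phi(r)\,\de r$ is $\F^n_t$-adapted, satisfies \eqref{SSL2 cA} trivially, and an argument analogous to the one at the end of Proposition~\ref{lem:qb1} shows $\|\tilde\A_t-\tilde\A_s-A_{s,t}\|_{L^p}\lesssim|t-s|^{1/2+\epsilon}$, so \eqref{SSL1 cA} holds. The uniqueness assertion in Theorem~\ref{thm:Sewing-lemma} then identifies $\A_t=\tilde\A_t$, and \eqref{SSL3 cA} delivers exactly \eqref{eqn:errRie}. The main conceptual step — and the one most easily mis-handled — is the independence/measurability bookkeeping in the randomised-quadrature cancellation: one must check that $\tau_{j+1}$ is independent of $\F^n_s$ precisely for $j\ge\lfloor ns\rfloor+1$, which is what determines the choice of $k_1$ and hence which blocks contribute. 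Everything else reduces to the routine ``boundary piece of size $1/n$'' accounting used in the previous proof.
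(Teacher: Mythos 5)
Your proposal is correct and follows essentially the same route as the paper: the same choice of germ $A_{s,t}=\E^s\bigl[\int_s^t\phi(r)\,\de r\bigr]$, the same trivial verification of \eqref{SSL2} by the tower property, the same randomised-quadrature cancellation of all full blocks $j\ge\lfloor ns\rfloor+1$ (the paper's \eqref{eqn:randomtrick}), and the same reduction to two boundary pieces of length at most $1/n$, converted to the stated form via $n^{-1}\le|t-s|$. The paper's write-up is slightly terser (it folds both boundary pieces back into the short-interval case \eqref{eqn:firstcase}), but the argument is identical in substance.
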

\begin{proof}

 Define for $s,t$ with $0\leq s\leq t\leq 1$
\begin{align*}
    \tilde{\mathcal{A}}_{t}:=\int_0^t \big(g_1(r,B_{\kappa_n(r)})-g_1(\kappa_n^\tau(r),B_{\kappa_n(r)})\big)g_2(\kappa_n(r),B_{\kappa_n(r)})\, \de r
\end{align*}
and 
\begin{align*}
    A_{s,t}:=\E^s\Big[\int_s^t \big(g_1(r,B_{\kappa_n(r)})-g_1(\kappa_n^\tau(r),B_{\kappa_n(r)})\big)g_2(\kappa_n(r),B_{\kappa_n(r)})\, \de r\Big],
\end{align*}
Similar to the proof of Proposition \ref{lem:qb1}, it can be easily verified that $\E^s [\delta A_{s,u,t}]=0$ and $\E^s [\tilde{\mathcal{A}}_{t}-\tilde{\mathcal{A}}_{s}-A_{s,t}]=0$. Thus conditions \eqref{SSL2} and \eqref{SSL2 cA} are satisfied. It remains to check conditions \eqref{SSL1} and \eqref{SSL1 cA}. 

 When $t-s < n^{-1}$, it is easy to get
\begin{align}\label{eqn:firstcase}
\begin{split}
     &\Bigl\|A_{s,t}\|_{L^p(\Omega)}\leq \Bigl\|\tilde{\mathcal{A}}_{t}-\tilde{\mathcal{A}}_{s}\|_{L^p(\Omega)}\\
  &\leq   \|g_1\|_{\mathcal{C}^{\alpha,\beta}_b}\|g_2\|_{\infty} |t-s|n^{-\alpha}\\
  &\leq \|g_1\|_{\mathcal{C}^{\alpha,\beta}_b}\|g_2\|_{\infty} |t-s|^{1/2+\epsilon}n^{-(1/2+\alpha-\epsilon)}.  
\end{split}
\end{align}
Now let us assume $t-s\geq n^{-1}$, and let $s \in [k_1/n, (k_1+1)/n)$ and $t \in [k_2/n, (k_2+1)/n)$ for some $k_1,k_2 \in \{0,\ldots, n-1\}$. Applying the same argument as in \eqref{eqn:randomtrick} yields that
\begin{align*}
    A_{s,t}=&\E^s\Big[\int_s^{(k_1+1)/n}\big(g_1(r,B_{\kappa_n(r)})-g_1(\kappa_n^\tau(r),B_{\kappa_n(r)})\big)g_2(\kappa_n(r),B_{\kappa_n(r)})\, \de r\Big]\\
    &+\E^s\Big[\int_{k_2}^{t}\big(g_1(r,B_{\kappa_n(r)})-g_1(\kappa_n^\tau(r),B_{\kappa_n(r)})\big)g_2(\kappa_n(r),B_{\kappa_n(r)})\, \de r\Big].
\end{align*}
As $(k_1+1)/n -s <n^{-1}$ and $t-k_2/n <n^{-1}$, each of the two terms falls into the case considered in \eqref{eqn:firstcase}, thus is bounded by $\|g_1\|_{\mathcal{C}^{\alpha,\beta}_b}\|g_2\|_{\infty} |t-s|^{1/2+\epsilon}n^{-(1/2+\alpha-\epsilon)}$ in the $L^p$ norm. By now condition \eqref{SSL1} has been verified.

Finally, let us verify that $ \tilde{\mathcal{A}}_{\cdot}$ is the sewing of $A_{\cdot,\cdot}$ by checking the condition \eqref{SSL1 cA}. Indeed, condition \eqref{SSL1 cA} is satisfied given the condition \eqref{SSL1} and the second line of \eqref{eqn:firstcase}.
\end{proof}
An alternative proof of Lemma \ref{lem:qb3_discrete} directly using the martingale argument is presented in Appendix \ref{sec:proof}.
\begin{prop}
    [Quadratic bound 3]\label{lem:qb2}
Let $\alpha,\beta\in(0,1]$, $p>0$, and take $\epsilon\in (0,1/2 )$.
Then
for all $g_1\in \mathcal{C}_b^{\alpha,\beta}([0,1]\times \R^d)$ and $g_2\in C([0,1]; C^1_b(\R^d))$,
$0\leq s\leq t\leq 1$, $n\in\N$, there exists an constant $\bar{C}(p, d,\beta,\epsilon)$ such that
\begin{align}\label{eqn:qb2}
    \begin{split}
  &\Bigl\|\int_s^t (g_1(r,B_r)-g_1(\kappa_n^\tau(r),B_{\kappa_n(r)}))g_2(r,B_{r})\, \de r\Bigr\|_{L^p(\Omega)}\\
  &\leq \bar{C}(p, d,\beta,\epsilon)\|g_1\|_{\mathcal{C}^{\alpha,\beta}_b}\|g_2\|_{C_b^1([0,1])} |t-s|^{1/2+\epsilon}n^{-(1/2+\gamma-\epsilon)},     
    \end{split}
\end{align}
where $\gamma=\alpha \wedge (\beta/2)$.
\end{prop}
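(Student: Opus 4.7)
My plan is to apply the stochastic sewing lemma (Theorem~\ref{thm:Sewing-lemma}) to the germ
\begin{align*}
A_{s,t} := \E^s\left[\int_s^t \bigl(g_1(r,B_r) - g_1(\kappa_n^\tau(r), B_{\kappa_n(r)})\bigr) g_2(r, B_r)\,\de r\right],
\end{align*}
with the natural candidate process $\tilde{\mathcal{A}}_t := \int_0^t (g_1(r,B_r) - g_1(\kappa_n^\tau(r), B_{\kappa_n(r)})) g_2(r, B_r)\,\de r$, whose $L^p$-norm is exactly the quantity to estimate. Exactly as in Proposition~\ref{lem:qb1}, both $\E^s[\delta A_{s,u,t}] = 0$ and $\E^s[\tilde{\mathcal{A}}_t - \tilde{\mathcal{A}}_s - A_{s,t}] = 0$ hold automatically because $A_{s,t}$ is an $\F_s$-conditional expectation of an increment of $\tilde{\mathcal{A}}$; hence conditions $\eqref{SSL2}$ and $\eqref{SSL2 cA}$ are satisfied with $C_2 = K_2 = 0$. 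Condition $\eqref{SSL1 cA}$ will follow from the crude bound $\|\tilde{\mathcal{A}}_t - \tilde{\mathcal{A}}_s - A_{s,t}\|_{L^p}\leq 2\|\tilde{\mathcal{A}}_t - \tilde{\mathcal{A}}_s\|_{L^p}\leq 4\|g_1\|_\infty\|g_2\|_\infty |t-s|\lesssim |t-s|^{1/2+\epsilon}$ for $|t-s|\leq 1$.

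The heart of the argument is the size bound $\eqref{SSL1}$. For $s \in [k/n, (k+1)/n)$ and $t \in [(k+2)/n, 1]$, I would decompose $A_{s,t} = I_1' + I_2' + I_3'$ along the lines of $\eqref{eqn:Astdec}$, using the identity
\begin{align*}
g_1(r, B_r) - g_1(\kappa_n^\tau(r), B_{\kappa_n(r)}) = \bigl[g_1(r, B_r) - g_1(r, B_{\kappa_n(r)})\bigr] + \bigl[g_1(r, B_{\kappa_n(r)}) - g_1(\kappa_n^\tau(r), B_{\kappa_n(r)})\bigr]
\end{align*}
on the main interval $[(k+2)/n,t]$ and isolating the boundary $[s,(k+2)/n]$. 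For $I_1'$, I pull $\|g_2\|_\infty$ outside and mimic $\eqref{eqn:AstI1}$. For
\begin{align*}
I_2' = \E^s\left[\int_{(k+2)/n}^t (g_1(r, B_{\kappa_n(r)}) - g_1(\kappa_n^\tau(r), B_{\kappa_n(r)})) g_2(r, B_r)\,\de r\right],
\end{align*}
I further split $g_2(r, B_r) = g_2(\kappa_n(r), B_{\kappa_n(r)}) + [g_2(r, B_r) - g_2(\kappa_n(r), B_{\kappa_n(r)})]$: the frozen-$g_2$ piece is precisely of the form treated in Lemma~\ref{lem:qb3_discrete} (yielding rate $n^{-(1/2+\alpha-\epsilon)}$, since conditional expectation only lowers the $L^p$-norm), while the remainder is controlled by pairing $|g_1(r, B_{\kappa_n(r)}) - g_1(\kappa_n^\tau(r), B_{\kappa_n(r)})|\leq \|g_1\|_{\Cb}n^{-\alpha}$ with the spatial-Lipschitz estimate $\|g_2(r, B_r) - g_2(r, B_{\kappa_n(r)})\|_{L^p}\leq \|g_2\|_{C_b^1}\|B_r - B_{\kappa_n(r)}\|_{L^p}\lesssim n^{-1/2}$. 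Finally, for
\begin{align*}
I_3' = \int_{(k+2)/n}^t \E^s\bigl[(g_1(r, B_r) - g_1(r, B_{\kappa_n(r)})) g_2(r, B_r)\bigr]\,\de r,
\end{align*}
I follow the heat-kernel route of $\eqref{eqn:AstI3}$--$\eqref{eqn:AstI32}$ but with the product $g_1\cdot g_2$, which is $\beta$-H\"older in space since $g_1\in\mathcal{C}_b^{\alpha,\beta}$ and $g_2\in C_b^1\subset C_b^\beta$ spatially; Propositions~\ref{prop:HK} and~\ref{prop:fractional} then deliver the $n^{-(1/2+\beta/2-\epsilon)}$ rate. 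Taking the worst of the three exponents gives $\gamma=\alpha\wedge(\beta/2)$, and $\eqref{SSL3 cA}$ concludes the proof.

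The hardest step will be $I_3'$. Unlike Proposition~\ref{lem:qb1}, where the inner expectation collapses cleanly into the heat-kernel difference $(\cP_{r-s} - \cP_{\kappa_n(r)-s}) g_1(r)(B_s)$, the factor $g_2(r, B_r)$ here couples with $B_r$ and forces successive conditioning, first on $\F^B_{\kappa_n(r)}$ (which turns $g_2(r, B_r)$ into $\cP_{r-\kappa_n(r)} g_2(r,\cdot)(B_{\kappa_n(r)})$ inside the $g_1(r, B_{\kappa_n(r)})$-term) and then on $\F^B_s$. The resulting splitting must be arranged so that only the spatial regularities of $g_1$ and $g_2$ enter, because $g_2$ has only qualitative, not quantitative, time-continuity; this is possible thanks to the semigroup estimate $\|g_2(r,\cdot) - \cP_\eta g_2(r,\cdot)\|_\infty\lesssim \eta^{1/2}\|g_2\|_{C_b^1}$, which effectively trades unavailable time regularity of $g_2$ for spatial $C_b^1$-regularity at the cost of an extra $n^{-1/2}$ factor.
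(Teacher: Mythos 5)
Your overall framework (stochastic sewing, identification of the candidate process, the trivial verification of \eqref{SSL2} for a germ of the form $\E^s[\tilde{\mathcal{A}}_t-\tilde{\mathcal{A}}_s]$) is sound, but the choice of germ creates a genuine gap in the size bound \eqref{SSL1}, in both $I_2'$ and $I_3'$. For $I_2'$: to invoke Lemma~\ref{lem:qb3_discrete} you must freeze $g_2$ at $(\kappa_n(r),B_{\kappa_n(r)})$ (the martingale/centering identity \eqref{eqn:randomtrick} requires the second factor to be constant on each subinterval), so the remainder is $\int (g_1(r,B_{\kappa_n(r)})-g_1(\kappa_n^\tau(r),B_{\kappa_n(r)}))\,(g_2(r,B_r)-g_2(\kappa_n(r),B_{\kappa_n(r)}))\,\de r$. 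Your estimate silently replaces this by the purely spatial increment $g_2(r,B_r)-g_2(r,B_{\kappa_n(r)})$; the discarded piece $g_2(r,\cdot)-g_2(\kappa_n(r),\cdot)$ admits no quantitative bound, since $g_2\in C([0,1];C_b^1(\R^d))$ carries only qualitative time continuity. Without the $n^{-1/2}$ gain from the full $g_2$-increment, the remainder is only $O(|t-s|\,n^{-\alpha}\|g_2\|_\infty)$, which does not imply $|t-s|^{1/2+\epsilon}n^{-(1/2+\alpha-\epsilon)}$ for $|t-s|$ of order one. For $I_3'$: after your double conditioning the cross term becomes $\cP_{\kappa_n(r)-s}\bigl[(g_1-\cP_\eta g_1)\cP_\eta g_2\bigr](\E^s[B_{\kappa_n(r)}])$ with $\eta=r-\kappa_n(r)$; because $\cP_\eta g_2$ sits \emph{inside} the outer semigroup, you cannot recombine $\cP_{\kappa_n(r)-s}(\cP_\eta g_1-g_1)$ into the semigroup difference of \eqref{eqn:A2difference}, and sup-norm bounds give only $n^{-\beta/2}$ per unit time --- the extra $n^{-1/2}$ (equivalently, the integrable singularity $(\kappa_n(r)-s)^{\beta/2-1}$) is lost. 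Both failures stem from the same structural decision: taking the germ to be $\E^s$ of the \emph{full} increment pushes every $g_2$-mismatch into \eqref{SSL1}, where it must carry the factor $n^{-(1/2+\gamma-\epsilon)}$.

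The paper resolves exactly this by splitting $\tilde{\mathcal{A}}=\tilde{\mathcal{A}}^1+\tilde{\mathcal{A}}^2$ (spatial and temporal errors of $g_1$) and choosing germs that do \emph{not} equal $\E^s$ of the corresponding increments: $A^1_{s,t}$ freezes $g_2$ at $(0,B_s)$, so that $g_2(0,B_s)$ is $\F_s$-measurable and factors out of $\E^s$, reducing the size bound to Proposition~\ref{lem:qb1}; the price is a nonzero $\E^s[\delta A^1_{s,u,t}]$, which is then controlled in \eqref{SSL2} by pairing Proposition~\ref{lem:qb1} with $\|g_2(0,B_u)-g_2(0,B_s)\|_{L^{2p}}\lesssim |u-s|^{1/2}$ --- this is where the hypothesis $g_2\in C([0,1];C_b^1)$ is actually used. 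Likewise $A^2_{s,t}$ freezes $g_2$ at the grid points so Lemma~\ref{lem:qb3_discrete} applies verbatim. The mismatches $\tilde{\mathcal{A}}^i_t-\tilde{\mathcal{A}}^i_s-A^i_{s,t}$, which contain the uncontrollable $g_2$-increments, then only need to satisfy the identification conditions \eqref{SSL1 cA}--\eqref{SSL2 cA}, for which a bound in powers of $|t-s|$ alone, with no gain in $n$, suffices. If you want to rescue your argument, you should adopt this trade-off: accept a nonzero $\E^s[\delta A_{s,u,t}]$ in exchange for a germ whose $g_2$-factor is $\F_s$-measurable or piecewise constant.
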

\begin{proof}
Define and divide the process as follows:
\begin{align*}
    \tilde{\mathcal{A}}_{t}:=\int_0^t (g_1(r,B_r)-g_1(\kappa_n^\tau(r),B_{\kappa_n(r)}))g_2(r,B_{r})\, \de r:= \tilde{\mathcal{A}}_{t}^1 + \tilde{\mathcal{A}}_{t}^2,
\end{align*}
where
$$
\tilde \A_t^1:=\int_0^t  (g_1(r,B_r)-g_1(r,B_{\kappa_n(r)}))g_2(r,B_{r})\,\de r
$$
and
$$
\tilde \A_t^2:=\int_0^t  (g_1(r,B_{\kappa_n(r)})-g_1(\kappa_n^\tau(r),B_{\kappa_n(r)}))g_2(r,B_{r})\,\de r
$$
are two $\F_t$-adapted processes.

Define for $0\leq s\leq t\leq  1$
$$
A_{s,t}^1:=\E^s\left[ \int_s^t (g_1(r,B_r)-g_1(r,B_{\kappa_n(r)}))g_2(0,B_{s})\, \de r\right],
$$
and
$$
A_{s,t}^2:=\int_s^t (g_1(r,B_{\kappa_n(r)})-g_1(\kappa_n^\tau(r),B_{\kappa_n(r)}))g_2({\kappa_n(r)},B_{\kappa_n(r)})\, \de r.
$$
First, let us claim that $\tilde \A_{\cdot}^1$ (resp. $\tilde \A_{\cdot}^2$) is the sewing of $A_{{\cdot},{\cdot}}^1$ (resp. $A_{{\cdot},{\cdot}}^2$) by checking conditions \eqref{SSL1 cA} and \eqref{SSL2 cA}. These can be easily verified as follows:
\begin{align*}
  & \| \tilde{\mathcal{A}}_t^1-\tilde{\mathcal{A}}_s^1-A_{s,t}^1\|_{L^p(\Omega)}  \\
   &= \left\| \E^s\left [\int_s^t  \big(g_1(r,B_r)-g_1(r,B_{\kappa_n(r)})\big)\left(g_2(r,B_{r})-g_2(0,B_{s})\right)\, \de r\right]\right\|_{L^p(\Omega)} \\
   &\leq \bar{C}(p,d) \|g_1\|_{\mathcal{C}^{\alpha,\beta}_b}\|g_2\|_{\infty} |t-s|^{1+\beta/2}.
\end{align*}
and
\begin{align*}
  & \| \tilde{\mathcal{A}}_t^2-\tilde{\mathcal{A}}_s^2-A_{s,t}^2\|_{L^p(\Omega)}  \\
   &\leq \left\| \int_s^t  \big(g_1(r,B_r)-g_1(\kappa_n^\tau(r),B_{\kappa_n(r)})\big)\big(g_2(r,B_{r})-g_2(\kappa_n(r),B_{\kappa_n(r)})\big)\,\de r\right\|_{L^p(\Omega)} \\
   &\leq \bar{C}(p,d) \|g_1\|_{\mathcal{C}^{\alpha,\beta}_b}\|g_2\|_{\infty} |t-s|^{1+\gamma}.
\end{align*}
Note that the condition \eqref{SSL2 cA} is automatically satisfied using the estimates above and Jensen's inequality for conditional expectation.

Next, let us check that the remaining conditions \eqref{SSL1} and \eqref{SSL2} of the stochastic sewing lemma (Theorem~\ref{thm:Sewing-lemma}) are satisfied for both $A_{s,t}^1$ and $A_{s,t}^2$. Because of the fact $\E^s [\delta A^2_{s,u,t}]=0$ and Lemma \ref{lem:qb3_discrete}, both the conditions are valid for $A_{s,t}^2$. It remains to check the conditions for $A_{s,t}^1$.
It is clearly that
\begin{equation*}
\E^s [\delta A^1_{s,u,t}]=\E^s \left[\E^u \left[\int_u^t\,g_1(r,B_r)-g_1(r,B_{\kappa_n(r)}) \de r \right]\left(g_2(0,B_{u})-g_2(0,B_{s}) \right) \right].
\end{equation*}

To estimate it,
\begin{align*}
   &\| \E^s [\delta A^1_{s,u,t}]\|_{L^p(\Omega)}\\
   &\leq  \left\| \E^u \left[\int_u^t g_1(r,B_r)-g_1(r,B_{\kappa_n(r)}) \,\de r\right] \right\|_{L^{2p}(\Omega)}  \\
  & \qquad \qquad \times \left \| g_2(0,B_{u})-g_2(0,B_{s})\right\|_{L^{2p}(\Omega)}\\
  &\leq  \bar{C}(2p,d)\|g_2\|_{\mathcal{C}^{0,1}_b} |u-s|^{1/2} \bar{C}(2p,d,\beta,\epsilon)\|g_1\|_{\mathcal{C}^{\alpha,\beta}_b} |t-u|^{1/2+\epsilon}n^{-(1/2+\gamma-\epsilon)}\\
  &\leq \bar{C}(2p,d,\beta,\epsilon)|t-s|^{1/2+\epsilon+1/2} \|g_2\|_{C^{1}_b([0,1])} \|g_1\|_{\mathcal{C}^{\alpha,\beta}_b} n^{-(1/2+\gamma-\epsilon)}\\
    &= \bar{C}(2p,d,\beta,\epsilon)|t-s|^{1+\epsilon} \|g_2\|_{C^{1}_b([0,1])} \|g_1\|_{\mathcal{C}^{\alpha,\beta}_b} n^{-(1/2+\gamma-\epsilon)},
\end{align*}
where we apply Proposition \ref{lem:qb1} with $\epsilon\in (0,1/2)$ to derive the last inequality
and so condition \eqref{SSL2} holds.

Similarly,

\begin{align*}
   &\| A_{s,t}^1\|_{L^p(\Omega)}\\
   &\leq  \left\| \E^s \left[\int_s^t g_1(r,B_r)-g_1(r,B_{\kappa_n(r)}) \,\de r\right] \right\|_{L^{2p}(\Omega)} \left \| g_2(s,B_{s})\right\|_{L^{2p}(\Omega)}\\
  &\leq \bar{C}(2p,d,\beta,\epsilon)|t-s|^{1/2+\epsilon} \|g_2\|_{\infty} \|g_1\|_{\mathcal{C}^{\alpha,\beta}_b} n^{-(1/2+\gamma-\epsilon)}.
\end{align*}
Thus all the conditions of the stochastic sewing lemma are satisfied.
\end{proof}
\begin{rmk}\label{rmk:shift}
    The quadratic bounds shown in Proposition \ref{lem:qb1} and Proposition \ref{lem:qb2} can be easily extended to the case when the Brownian motion $B$ is shifted by a constant $x\in \R^d$, i.e.,  the estimates \eqref{eqn:qb1} and \eqref{eqn:qb2} still hold if one replaces $B$ with $B+x$.
\end{rmk}
\begin{rmk}
    The quadratic bounds shown in Proposition \ref{lem:qb1} and Proposition \ref{lem:qb2} can be easily extended to the case of fractional Brownian motion (fBM) using \cite[Proposition 3.6, Proposition 3.7]{Butkovsky2021}. 
\end{rmk}

\begin{lem}\label{lem:girsonovstrans}
Let $p>0$, $\epsilon\in(0,1/2)$, and $X^\n$ be the solution of \eqref{eqn:randomisedemcontinuous} with $f\in \Cb([0,1]\times\R^d;\R^d)$.
Then for all $g\in\Cb([0,1]\times\R^d;\R^d)$, $0\leq s\leq t \leq 1$, there exists a constant $\bar{C}(p, d,\beta,\eps)$ such that
\begin{align}\label{eqn:qb1x}
    \begin{split}
   & \Bigl\|\int_s^t (g(r,X_r^\n)-g(\kappa^\tau_n(r),X^\n_{\kappa_n(r)}))\, \de r\Bigr\|_{L^p(\Omega;\R^d)}\\
&\leq \bar{C}(p, d,\beta,\epsilon)\|f\|_{\Cb}\|g\|_{\Cb} |t-s|^{1/2+\epsilon} n^{-(1/2+\gamma-\epsilon)},    
    \end{split}
\end{align}
where $\gamma=\alpha \wedge (\beta /2)$.
Moreover, for all $g_1\in \mathcal{C}_b^{\alpha,\beta}([0,1]\times \R^d)$ and $g_2\in C([0,1];C^1_b( \R^d))$,
$0\leq s\leq t\leq 1$, $n\in\N$, there exists an constant $\bar{C}(p, d,\beta,\epsilon)$ such that
\begin{align}\label{eqn:qb2x}
    \begin{split}
  &\Bigl\|\int_s^t (g_1(r,X_r)-g_1(\kappa_n^\tau(r),X_{\kappa_n(r)}))g_2(r,X_{r})\, \de r\Bigr\|_{L^p(\Omega)}\\
  &\leq \bar{C}(p, d,\beta,\epsilon)\|f\|_{\Cb}\|g_1\|_{\mathcal{C}^{\alpha,\beta}_b}\|g_2\|_{C^{1}_b([0,1])} |t-s|^{1/2+\epsilon}n^{-(1/2+\gamma-\epsilon)}.     
    \end{split}
\end{align}
\end{lem}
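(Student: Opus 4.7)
The plan is to use Girsanov's theorem to replace $X^{\n}$ by a shifted Brownian motion and then invoke Propositions \ref{lem:qb1} and \ref{lem:qb2} together with Remark \ref{rmk:shift} under the new measure. On the product space $(\Omega,\F,\P)$, define the Girsanov density
\[
\rho_n := \exp\left(-\int_0^1 f(\kappa_n^\tau(s), X^{\n}_{\kappa_n(s)})\cdot \de B_s - \tfrac12 \int_0^1 |f(\kappa_n^\tau(s),X^{\n}_{\kappa_n(s)})|^2 \,\de s \right)
\]
and $\de \tilde{\P} := \rho_n \, \de \P$. Since $f\in\Cb$ is bounded, $\rho_n$ is a positive $(\F^n_t)$-martingale, and both $\rho_n$ and $\rho_n^{-1}$ possess moments of every order under $\P$ and $\tilde\P$ respectively, with bounds depending only on $p$, $q$ and $\|f\|_{\Cb}$. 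By Girsanov's theorem, the process
\[
\tilde B_t := B_t + \int_0^t f(\kappa_n^\tau(s), X^{\n}_{\kappa_n(s)})\, \de s
\]
is a Brownian motion with respect to $(\F^n_t)$ under $\tilde \P$; comparison with \eqref{eqn:randomisedemcontinuous} then gives $X^{\n}_t = x_0 + \tilde B_t$ for all $t\in[0,1]$.

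Next, I would verify that the joint law of $(\tilde B,(\tau_j))$ under $\tilde \P$ coincides with that of $(B,(\tau_j))$ under $\P$; in particular, the $\tau_j$'s remain i.i.d.\ $\mathcal U(0,1)$ and independent of $\tilde B$. This is most transparently seen by conditioning on $\F^\tau$: for each frozen $\omega_\tau$, the density $\rho_n$ is a Dol\'eans--Dade exponential on $(\Omega_B,\F^B,\P_B)$, so the conditional law of $\tilde B$ given $\F^\tau$ under $\tilde\P$ is Wiener measure, which does not depend on $\omega_\tau$. Consequently, the $\F^\tau$-marginal of $\tilde \P$ still equals $\P_\tau$, and $\tilde B$ is independent of $(\tau_j)$ under $\tilde \P$.

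Writing $I$ for the integral on the left-hand side of either \eqref{eqn:qb1x} or \eqref{eqn:qb2x}, H\"older's inequality with conjugate exponents $(q,q')$ yields
\[
\E[|I|^p] = \tilde\E[\rho_n^{-1}|I|^p]\le \tilde\E[\rho_n^{-q'}]^{1/q'}\, \tilde\E[|I|^{pq}]^{1/q},
\]
where the first factor is controlled by a constant depending only on $p,q$ and $\|f\|_{\Cb}$. For the second factor, substitute $X^{\n} = x_0 + \tilde B$; by the distributional identification of the previous paragraph, this quantity has the same $L^{pq}(\Omega,\tilde\P)$-norm as the analogous integral with $B$ replaced by $B+x_0$ taken under $\P$. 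Remark \ref{rmk:shift} combined with Proposition \ref{lem:qb1} (for \eqref{eqn:qb1x}) or Proposition \ref{lem:qb2} (for \eqref{eqn:qb2x}) applied at exponent $pq$ then bounds it by the right-hand side of \eqref{eqn:qb1x} (resp.\ \eqref{eqn:qb2x}) with an adjusted constant, and raising to the $1/p$ power finishes the proof.

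The main obstacle is really the distributional identification in the second paragraph: Propositions \ref{lem:qb1} and \ref{lem:qb2} depend crucially on the randomised quadrature identity \eqref{eqn:randomtrick}, which in turn uses that $\tau_{j+1}$ is $\mathcal U(0,1)$-distributed and independent of $B$ and of $\F^n_s$. Confirming that this independence and uniformity survive the measure change is exactly what the $\F^\tau$-conditioning argument provides. The remainder is routine $L^p$ bookkeeping.
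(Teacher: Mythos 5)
Your proposal is correct and follows essentially the same route as the paper's proof: a Girsanov change of measure that turns $X^{(n)}$ into the shifted Brownian motion $x_0+\tilde B$, a H\"older/Cauchy--Schwarz step to peel off the density (whose moments are controlled by $\|f\|_{\Cb}$), and an appeal to Propositions \ref{lem:qb1} and \ref{lem:qb2} via Remark \ref{rmk:shift}. The paper carries out the measure change conditionally on a fixed realisation $\omega_\tau$ --- which is exactly the $\F^\tau$-conditioning you invoke to justify that the $\tau_j$'s remain i.i.d.\ uniform and independent of $\tilde B$ --- and uses Cauchy--Schwarz where you allow general conjugate exponents; these are cosmetic differences only.
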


\begin{proof}
Fix an arbitrary realisation $\omega_\tau\in \Omega_\tau$. Let
$$
\psi^n(t,\omega_\tau):=\int_0^t f(\kappa^\tau_n(r,\omega_\tau),X^\n_{\kappa_n(r)}(\omega_\tau))\,\de r.
$$
Take $v(t,\omega_\tau)=f\big(\kappa^\tau_n(t,\omega_\tau),X^\n_{\kappa_n(t)}(\omega_\tau)\big)$. 
Note that for each $\omega_\tau$
\begin{equation}\label{eqn:girsonov1}
   \EB\!\left[\exp\Big(\frac{1}{2}\int_{0}^{1} v^2(t,\omega_\tau) \,\de t \Big)\right]\leq \exp\big(\|f\|^2_{\Cb}/2\big).
\end{equation}
 Let us apply the Girsanov theorem to the function $v$, then there exists a probability measure $\tilde\PB$ equivalent to $\PB$ such that the process $\tilde B:=B+\psi^n(\omega_\tau)$ is a
Brownian motion on $[0,1]$ under $\tilde\PB$.

In the following, we will use $\EB[\cdot |\mathcal{F}^\tau]$ or $\E_{\tilde B}[\cdot |\mathcal{F}^\tau]$ to emphasise the expectation is evaluated under probability $\PB$ or $\tilde \PB$, given a fixed realisation $\omega_\tau$, without directly mentioning $\omega_\tau$. We deduce by \eqref{eqn:girsonov1} that
\begin{equation*}
\EB\Bigl[\frac{\de\PB}{\de\tilde\PB}\,\Big\vert\, \F^\tau\Bigr]\le \exp\big(\|f\|^2_{\Cb}/2\big).
\end{equation*}
Moreover, for each $p$, we have
\begin{align}\label{eqn:girsanov2}
\begin{split}
  &\EB \!\left[\Bigl|  \int_s^t \left( g(r, X^\n_r)- g(\kappa^\tau_n(r),X^\n_{\kappa_n(r)}) \right) \, \de r \Bigr|^p \,\Big\vert\, \F^\tau\right]\\
&=\E_{\tilde B}\! \left[\Bigl|  \int_s^t \left( g(r,X^\n_r)- g(\kappa^\tau_n(r),X^\n_{\kappa_n(r)}) \right) \, \de r \Bigr|^p\frac{\de\PB}{\de\tilde\PB} \,\Big\vert\, \F^\tau\right]\\
&\le \left(\E_{\tilde B}\! \left[\Bigl|  \int_s^t \left( g(r,X^\n_r)- g(\kappa^\tau_n(r),X^\n_{\kappa_n(r)}) \right) \, \de r \Bigr|^{2p}\,\Big\vert\, \F^\tau\right]\right)^{1/2}\Bigl(\E_{\tilde B}\Bigl[\left(\frac{\de\PB}{\de\tilde\PB}\right)^2\,\Big\vert\, \F^\tau\Bigr]\Bigr)^{1/2}
\\
&=\left(\EB\! \left[\Bigl|  \int_s^t \left( g(r, B_r+x_0^n)- g(\kappa^\tau_n(r), B_{\kappa_n(r)}+x_0^n) \right) \, \de r \Bigr|^{2p}\,\Big\vert\, \F^\tau\right]\right)^{1/2}\Bigl(\EB\Bigl[\frac{\de\PB}{\de\tilde\PB}\,\Big\vert\, \F^\tau\Bigr]\Bigr)^{1/2}\\
&\leq \|f\|_{\Cb}\left(\EB\! \left[\Bigl|  \int_s^t \left( g(r, B_r+x_0^n)- g(\kappa^\tau_n(r), B_{\kappa_n(r)}+x_0^n) \right) \, \de r \Bigr|^{2p}\,\Big\vert\, \F^\tau\right]\right)^{1/2}.
\end{split}
\end{align}

Taking expectation with respect to $\omega_\tau$ on both side of \eqref{eqn:girsanov2} and using \eqref{eqn:qb1} and Remark \ref{rmk:shift} yield:
\begin{align*}
&\E \!\left[\Bigl|  \int_s^t \left( g(r, X^\n_r)- g(\kappa^\tau_n(r),X^\n_{\kappa_n(r)}) \right) \, \de r \Bigr|^p \right]\\
&\le \bar{C}(2p,d,\beta,\epsilon)\|f\|^p_{\Cb}\|g\|_{\Cb}^p n^{-p(1/2+\gamma-\epsilon)}|t-s|^{p(1/2+\epsilon)}.
\end{align*}
A similar argument leads to Eqn. \eqref{eqn:qb2x}.

\end{proof}

Applying the Kolmogorov continuity theorem to Lemma \ref{lem:girsonovstrans} yields
\begin{cor}\label{cor:qbsup}
Let $p>0$, $\epsilon\in(0,1/2)$, and $X^\n$ be the solution of \eqref{eqn:randomisedemcontinuous} with $f\in \Cb([0,1]\times\R^d;\R^d)$.
Then for all $g\in\Cb([0,1]\times\R^d;\R^d)$, $0\leq t \leq 1$, there exists a constant $\bar{C}(p, d,\beta,\eps)$ such that
\begin{align}\label{eqn:qb1sup}
    \begin{split}
   & \Bigl\|\sup_{0\leq s\leq t}\big|\int_0^s (g(r,X_r^\n)-g(\kappa^\tau_n(r),X^\n_{\kappa_n(r)}))\, \de r \big|\Bigr\|_{L^p(\Omega;\R^d)}\\
&\leq \bar{C}(p, d,\beta,\epsilon)\|f\|_{\Cb}\|g\|_{\Cb}  n^{-(1/2+\gamma-\epsilon)},    
    \end{split}
\end{align}
where $\gamma=\alpha \wedge (\beta /2)$.
Moreover, for all $g_1\in \mathcal{C}_b^{\alpha,\beta}([0,1]\times \R^d)$ and $g_2\in C([0,1];C^1_b( \R^d))$,
$0\leq t\leq 1$, $n\in\N$, there exists an constant $\bar{C}(p, d,\beta,\epsilon)$ such that
\begin{align}\label{eqn:qb2sup}
    \begin{split}
  &\Bigl\|\sup_{0\leq s\leq t}\big| (g_1(r,X_r)-g_1(\kappa_n^\tau(r),X_{\kappa_n(r)}))g_2(r,X_{r})\, \de r\big|\Bigr\|_{L^p(\Omega)}\\
  &\leq \bar{C}(p, d,\beta,\epsilon)\|f\|_{\Cb}\|g_1\|_{\mathcal{C}^{\alpha,\beta}_b}\|g_2\|_{C^1_b([0,1])} n^{-(1/2+\gamma-\epsilon)}.   
    \end{split}
\end{align}
\end{cor}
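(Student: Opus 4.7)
The plan is to deduce the supremum bounds \eqref{eqn:qb1sup} and \eqref{eqn:qb2sup} from the increment bounds in Lemma \ref{lem:girsonovstrans} by invoking the Kolmogorov continuity theorem. For \eqref{eqn:qb1sup}, I would set
\begin{equation*}
Y_s := \int_0^s \big(g(r, X^{(n)}_r) - g(\kappa^\tau_n(r), X^{(n)}_{\kappa_n(r)})\big) \, \de r, \quad s \in [0,1].
\end{equation*}
Since $g$ is bounded, $Y$ has continuous (in fact Lipschitz) sample paths, and $Y_0 = 0$. The analogous process $Z_s$ built from $g_1$ and $g_2$ for \eqref{eqn:qb2sup} shares these properties.

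Given $p > 0$ and $\epsilon \in (0,1/2)$ as in the corollary, I would first fix $\epsilon' \in (0, \epsilon)$ and then pick $q \geq p$ large enough that $q(1/2+\epsilon') > 1$; this is always possible since $\epsilon' > 0$. Applying \eqref{eqn:qb1x} with parameters $(q, \epsilon')$ then delivers
\begin{equation*}
\|Y_t - Y_s\|_{L^q(\Omega;\R^d)} \leq \bar{C}(q,d,\beta,\epsilon') \|f\|_{\Cb} \|g\|_{\Cb} |t-s|^{1/2+\epsilon'} n^{-(1/2+\gamma-\epsilon')}
\end{equation*}
for all $0 \leq s \leq t \leq 1$. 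Because $1/2 + \epsilon' > 1/q$, the Kolmogorov continuity theorem (or equivalently a Garsia-Rodemich-Rumsey estimate) converts this modulus-of-continuity bound into
\begin{equation*}
\Big\| \sup_{0 \leq s \leq 1} |Y_s| \Big\|_{L^q(\Omega;\R^d)} \leq \bar{C}(q,\epsilon') \|f\|_{\Cb} \|g\|_{\Cb} n^{-(1/2+\gamma-\epsilon')},
\end{equation*}
using $Y_0 = 0$ to eliminate the endpoint term. Finally, $\|\cdot\|_{L^p} \leq \|\cdot\|_{L^q}$ (since $q \geq p$) together with $n^{-(1/2+\gamma-\epsilon')} \leq n^{-(1/2+\gamma-\epsilon)}$ (since $n \geq 1$ and $\epsilon' < \epsilon$) yields \eqref{eqn:qb1sup}.

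The second bound \eqref{eqn:qb2sup} is obtained by the identical argument applied to $Z_s$, invoking \eqref{eqn:qb2x} in place of \eqref{eqn:qb1x}. I do not anticipate any serious obstacle: the time-regularity exponent $1/2+\epsilon'$ of the integrated process is strictly above the Kolmogorov threshold $1/q$ as long as $q$ is chosen sufficiently large, so the sup-norm bound emerges with a constant depending only on $q$ and $\epsilon'$, both of which are chosen as fixed functions of the original data $(p, \epsilon)$, giving the claimed dependence $\bar{C}(p, d, \beta, \epsilon)$. The only mild subtlety is the simultaneous requirement $q \geq p$ and $q(1/2+\epsilon') > 1$, which is harmless.
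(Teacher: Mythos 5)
Your proposal is correct and is exactly the paper's argument: the paper's entire proof of this corollary is the single remark that one applies the Kolmogorov continuity theorem to the increment bounds \eqref{eqn:qb1x} and \eqref{eqn:qb2x} of Lemma \ref{lem:girsonovstrans}. Your version merely fills in the standard details (choosing $q\geq p$ with $q(1/2+\epsilon')>1$, using $Y_0=0$, and absorbing the loss $\epsilon'\to\epsilon$), all of which are sound.
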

\section{Error analysis via a PDE approach}\label{sec:erroranalysis}
\subsection{Some PDE estimate}

\begin{lem}\cite{Pamen2017}\label{lem:pdeestimate}
	For any $\varepsilon \in (0,1)$, there exist $m \in \N$ and $(T_i)_{i=0, \ldots,m}$ such that $0=T_0<\cdots <T_i<T_{i+1}<\cdots<T_m=1$ and for any $i=0,\ldots,m-1$,
	\begin{align*}
	\|\varphi\|_{C_b^{\beta}([0,1])} C_0\cdot (T_{i+1}-T_{i})^{1/2} \leq \varepsilon \text{ and }
	\|f\|_{C_b^{\beta}([0,1])} C_0\cdot (T_{i+1}-T_{i})^{1/2} \leq \frac{1}{4}.
	\end{align*}
 Moreover, for all $\varphi \in C([0,1];C_b^{\beta}(\R^d;\R^d))$, there exists at least one solution $u$ to the backward Kolmogorov equation
	\begin{align*}
	\frac{\partial u}{\partial t}  + \nabla u \cdot f + \frac{1}{2} \Delta u=-\varphi \text{ on } [T_i,T_{i+1}] \times \R^d,~u(T_{i+1},x)=0
	\end{align*}
	of class
	\begin{align*}
	u \in C([T_i,T_{i+1}];C_b^{2,\beta'}(\R^d;\R^d)) \cap C^1([T_i,T_{i+1}];C_b^{\beta'}(\R^d;\R^d))
	\end{align*}
	for all $\beta' \in (0,\beta)$ with
	\begin{align*}
	\|D^2u\|_{C_b^{\beta'}([T_i,T_{i+1}])} \leq M \|\varphi\|_{C_b^{\beta}([T_i,T_{i+1}])}
	\end{align*}
	for some constant $M$ and
	\begin{align*}
	\|\nabla u\|_{C_b^{\beta}([T_i,T_{i+1}])} \leq C_0(T_{i+1}-T_i)^{1/2} \|\varphi\|_{C_b^{\beta}([T_i,T_{i+1}])}
	\end{align*}
	for some constant $C_0$.
\end{lem}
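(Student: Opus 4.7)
The plan is to treat the partition construction and the PDE solvability on each subinterval as two essentially independent tasks, following the general strategy of \cite{Pamen2017}.

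First, for the partition: I would fix the constant $C_0$ appearing in the gradient bound of part (ii) below, and simply take the uniform mesh $T_i := i/m$ with $m\in\N$ large enough that
\begin{equation*}
C_0\, m^{-1/2} \max\bigl(\|\varphi\|_{C_b^\beta([0,1])},\,4\|f\|_{C_b^\beta([0,1])}\bigr) \leq \min(\varepsilon, 1/4).
\end{equation*}
Both smallness conditions then follow immediately. The only care needed here is that $C_0$ must be shown not to depend on the subinterval length itself, which is why it is important to state the gradient bound in (ii) with an explicit $\sqrt{T_{i+1}-T_i}$ prefactor rather than inside $C_0$.

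Second, on each subinterval $[T_i, T_{i+1}]$ I would construct $u$ via the Feynman--Kac representation
\begin{equation*}
u(t,x) = \E\Bigl[\int_t^{T_{i+1}} \varphi(s, X_s^{t,x}) \, \de s\Bigr],
\end{equation*}
where $X^{t,x}$ is the weak solution of $\de X_s = f(s, X_s) \de s + \de B_s$ starting from $x$ at time $t$, with weak existence and uniqueness in law guaranteed by Girsanov's theorem together with boundedness of $f$. Classical parabolic Schauder theory, applied to the linear operator $\partial_t + f\cdot\nabla + \tfrac{1}{2}\Delta$ with coefficients in $\Cb$, upgrades $u$ to a classical solution of class $C([T_i,T_{i+1}];C_b^{2,\beta'}(\R^d;\R^d)) \cap C^1([T_i,T_{i+1}];C_b^{\beta'}(\R^d;\R^d))$ for any $\beta' \in (0,\beta)$, and delivers the Hessian bound $\|D^2 u\|_{C_b^{\beta'}} \leq M\|\varphi\|_{C_b^\beta}$ with $M$ independent of the subinterval length (the zero terminal condition trivialises the boundary layer).

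The main obstacle is the gradient bound with its explicit $(T_{i+1}-T_i)^{1/2}$ factor, since this smallness is exactly what drives the perturbative argument later. My approach would be a Bismut--Elworthy--Li type integration by parts: applying Girsanov to remove the drift and then differentiating in $x$ produces a representation of the form
\begin{equation*}
\nabla u(t,x) = \EB\Bigl[\int_t^{T_{i+1}} \varphi(s, B_s + x)\, \frac{B_s - B_t}{s-t}\, \frac{\de\PB}{\de\widetilde{\mathbb{P}}_B}\, \de s\Bigr],
\end{equation*}
whose modulus is at most a constant times $\|\varphi\|_\infty \int_t^{T_{i+1}}(s-t)^{-1/2}\,\de s \lesssim (T_{i+1}-T_i)^{1/2}\|\varphi\|_\infty$. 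The $\beta$-Hölder seminorm of $\nabla u$ is handled by the same Bismut-type manipulation combined with standard Gaussian heat-kernel gradient estimates, again producing an integrable $(s-t)^{-1/2}$ singularity whose integral yields the desired $\sqrt{T_{i+1}-T_i}$ prefactor. Full technical details are as in \cite{Pamen2017}.
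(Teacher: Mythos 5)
The paper does not prove this lemma at all: it is imported verbatim from \cite{Pamen2017}, where the solution is built as a fixed point of the Duhamel (mild) formulation
$u(t,\cdot)=\int_t^{T_{i+1}}P_{s-t}\bigl(\varphi(s,\cdot)+\nabla u(s,\cdot)\cdot f(s,\cdot)\bigr)\,\de s$,
with the heat-semigroup smoothing $\|\nabla P_t g\|_{C^\beta}\lesssim t^{-1/2}\|g\|_{C^\beta}$ supplying the $\sqrt{T_{i+1}-T_i}$ prefactor. Measured against that, your sketch has a conceptual miss and a technical gap. The conceptual miss: the two smallness conditions are \emph{not} independent of the PDE construction. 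The condition $\|f\|_{C_b^\beta}C_0\sqrt{T_{i+1}-T_i}\le 1/4$ is precisely what makes the fixed-point map a contraction (the term $\nabla u\cdot f$ is fed back into the source, and one needs $C_0\sqrt{\delta}\,\|f\|\le 1/4$ to absorb $\|\nabla u\|_{C^\beta}\|f\|_{C^\beta}$ and close the gradient estimate), and the Hessian bound with $M$ uniform over subintervals inherits the same structure. Treating the partition and the solvability as decoupled tasks hides the only place where the $1/4$ is actually used.

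The technical gap is in your Bismut--Elworthy--Li representation. After Girsanov, the density $\de\PB/\de\widetilde{\mathbb{P}}_B$ depends on $x$ through $f(r, x+\widetilde B_r-\widetilde B_t)$, and $f$ is only $\beta$-H\"older in space, so you cannot differentiate under the expectation; the weight $(B_s-B_t)/(s-t)$ accounts only for the explicit $x$ in $\varphi(s,B_s+x)$, not for the $x$-dependence of the density, and the displayed formula for $\nabla u$ does not follow. The repair is exactly the mild formulation: write $\nabla u(t,x)=\int_t^{T_{i+1}}\nabla P_{s-t}\bigl(\varphi(s,\cdot)+\nabla u(s,\cdot)\cdot f(s,\cdot)\bigr)(x)\,\de s$ (here the Gaussian integration by parts $\nabla P_tg(x)=\E[g(B_t+x)B_t/t]$ is legitimate because it acts on the heat kernel, not on $f$), estimate in $C^\beta$, and invoke the contraction. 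Your Feynman--Kac definition of $u$ and the Schauder upgrade to $C_b^{2,\beta'}$ are otherwise consistent with the cited result, and the uniform-mesh partition argument is fine once $C_0$ has been produced by the fixed-point scheme.
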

\subsection{The strong convergence of randomised EM}

\begin{thm}\label{thm:main1}
Assume that the drift coefficient $f\in \Cb([0,1]\times\R^d;\R^d)$. Consider the solution $X(t)$  of \eqref{eq:SDE} over $[0,1]$ and its numerical approximation $X_t^\n$ via the randomised numerical scheme \eqref{eqn:randomisedem} at a given stepsize $n^{-1}\in (0,1)$. 
Then for any $p\geq 1$ and $\epsilon\in (0,1/2)$, there exists a positive constant $C$ depending on $m,M, d,p,x_0, \alpha, \beta$ and $\|f\|_{\Cb}$ such that
\begin{align*}
\E\left[\sup_{0 \leq t \leq 1}\left|X(t)-X_t^{(n)}\right|^p \right]  \leq Cn^{  -(1/2+\gamma-\epsilon)p},
\end{align*}
 with $\gamma:=\alpha\wedge (\beta/2).$
\end{thm}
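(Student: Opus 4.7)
The plan is to adapt the PDE-based Zvonkin transformation used in Pamen--Taguchi \cite{Pamen2017}, feeding all time-discretisation errors into the quadratic bounds of Corollary~\ref{cor:qbsup}. First, I would fix $\varepsilon_0 \in (0, 1/2)$ and invoke Lemma~\ref{lem:pdeestimate} to obtain a finite partition $0 = T_0 < T_1 < \cdots < T_m = 1$ on which, for every $i \in \{0, \ldots, m-1\}$, the $\R^d$-valued backward Kolmogorov equation with $\varphi = f$ (applied component-wise),
\begin{align*}
\partial_t U^i + \nabla U^i \cdot f + \tfrac{1}{2}\Delta U^i = -f \text{ on } [T_i, T_{i+1}] \times \R^d, \qquad U^i(T_{i+1}, \cdot) = 0,
\end{align*}
admits a solution with $\|\nabla U^i\|_\infty \leq \varepsilon_0$ and $\|D^2 U^i\|_\infty \leq M \|f\|_{\Cb}$. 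The map $\Phi^i(t, x) := x + U^i(t, x)$ is then a $C^{1,2}$ diffeomorphism in $x$ satisfying $(1 - \varepsilon_0) |x - y| \leq |\Phi^i(t, x) - \Phi^i(t, y)| \leq (1 + \varepsilon_0) |x - y|$.

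Next I would apply It\^o's formula to $\Phi^i$ along both $X$ and $X^{(n)}$ on $[T_i, T_{i+1}]$. By design of the PDE, the drift cancels exactly for the true solution, yielding
\begin{align*}
\Phi^i(t, X(t)) - \Phi^i(T_i, X(T_i)) = \int_{T_i}^t (I + \nabla U^i(s, X(s))) \de B(s),
\end{align*}
while for $X^{(n)}$ the lagged drift produces two explicit remainder integrals:
\begin{align*}
&\Phi^i(t, X^{(n)}(t)) - \Phi^i(T_i, X^{(n)}(T_i)) - \int_{T_i}^t (I + \nabla U^i(s, X^{(n)}(s))) \de B(s) \\
&\quad = \int_{T_i}^t \big[ f(\kappa_n^\tau(s), X^{(n)}_{\kappa_n(s)}) - f(s, X^{(n)}(s)) \big] \de s \\
&\qquad + \int_{T_i}^t \nabla U^i(s, X^{(n)}(s)) \cdot \big[ f(\kappa_n^\tau(s), X^{(n)}_{\kappa_n(s)}) - f(s, X^{(n)}(s)) \big] \de s.
\end{align*}
Subtracting the two identities and taking the $L^p$ norm of the supremum over $[T_i, T_{i+1}]$, the bi-Lipschitz property of $\Phi^i$ lets me pass freely between $|\Phi^i(t, X) - \Phi^i(t, X^{(n)})|$ and $|X(t) - X^{(n)}(t)|$. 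The stochastic integral difference is handled by the Burkholder--Davis--Gundy inequality combined with the Lipschitz bound $\|D^2 U^i\|_\infty \leq M \|f\|_{\Cb}$, producing a Gronwall-friendly term $\int_{T_i}^t \E[\sup_{T_i \le r \le s} |X(r) - X^{(n)}(r)|^p] \de s$. The two remainder integrals are precisely of the form covered by Corollary~\ref{cor:qbsup}: the first via \eqref{eqn:qb1sup} with $g = f$, and the second via \eqref{eqn:qb2sup} with $g_1 = f$ and $g_2 = \nabla U^i$; both contribute $C n^{-(1/2 + \gamma - \epsilon) p}$.

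Gronwall's inequality on each $[T_i, T_{i+1}]$ then gives
\begin{align*}
\E\Big[\sup_{T_i \le t \le T_{i+1}} |X(t) - X^{(n)}(t)|^p\Big] \leq C \,\E\big[|X(T_i) - X^{(n)}(T_i)|^p\big] + C n^{-(1/2 + \gamma - \epsilon) p},
\end{align*}
and iterating across the $m$ intervals, starting from the exact initial condition $X(0) = X^{(n)}(0) = x_0$ and absorbing the factor $(1 + C)^m$ into the overall constant, closes the bound. The main obstacle I foresee is verifying that $\nabla U^i$ has enough regularity to serve as an admissible test function $g_2$ in Corollary~\ref{cor:qbsup}: Lemma~\ref{lem:pdeestimate} provides $U^i \in C([T_i, T_{i+1}]; C_b^{2, \beta'})$, so $\nabla U^i$ is continuous in time, bounded and globally Lipschitz in space, hence in $C([T_i, T_{i+1}]; C_b^1(\R^d))$ with norm controlled by $M \|f\|_{\Cb}$ uniformly in $i$. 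Once this regularity is secured, the proof is a clean assembly of the Zvonkin transformation with the sewing-lemma-based quadratic bounds.
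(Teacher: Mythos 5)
Your proposal is correct and follows essentially the same route as the paper: the Pamen--Taguchi PDE decomposition on the partition from Lemma \ref{lem:pdeestimate}, with the two drift-discretisation remainders estimated via Corollary \ref{cor:qbsup} and the stochastic-integral difference handled by the Burkholder--Davis--Gundy inequality followed by Gronwall's inequality on each subinterval and iteration over the $m$ pieces. Your phrasing via the bi-Lipschitz Zvonkin map $\Phi^i=\mathrm{id}+U^i$ is only a cosmetic repackaging of the paper's step of bounding $|u_{l,i}(t,X_t)-u_{l,i}(t,X_t^{(n)})|\leq\varepsilon|X_t-X_t^{(n)}|$ and absorbing that $\varepsilon$-small term into the left-hand side.
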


\begin{proof}
For a given $\varepsilon \in (0,1)$, we consider the partition  $(T_i)_{i=0,\ldots,m}$ of closed interval $[0,1]$ which is considered in Lemma \ref{lem:pdeestimate}.
For $l=1,\ldots,d$ and $i=1,\ldots,m$, Lemma \ref{lem:pdeestimate} implies that there exists at least one solution $u_{l,i}$ to the backward Kolmogorov equation:
\begin{align*}
\frac{\partial u_{l,i}}{\partial t}  + \nabla u_{l,i} \cdot f + \frac{1}{2} \Delta u_{l,i}=-f_l \text{ on } [T_{i-1},T_{i}] \times \R^d,~ u_{l,i}(T_{i},x)=0,
\end{align*}
where $f_l$ represents the $l$th coordinate of $f$, and $u_{l,i}$ satisfies,
\begin{align*}
\|\nabla u_{l,i}\|_{C_b^{\beta}[T_{i-1},T_i]}
\leq C_0\cdot(T_i-T_{i-1})^{1/2}\|f\|_{\Cb}
\leq \varepsilon.
\end{align*}
Following \cite{Pamen2017}, for any $t \in [T_{i-1},T_{i}]$, by  It\^o's formula, we have
\begin{align}\label{eqn:XTi}
\int_{T_{i-1}}^t f_l(s,X(s)) \,\de s
=u_{l,i}(T_{i-1},X(T_{i-1}))
-u_{l,i}(t,X(t))
+\int_{T_{i-1}}^t \nabla u_{l,i}(s,X(s)) \,\de B_s,
\end{align}
and
\begin{align}\label{eqn:XnTi}
\int_{T_{i-1}}^t f_l(s,X_s^{(n)}) \,\de s
=&u_{l,i}(T_{i-1},X_{T_{i-1}}^{(n)})
-u_{l,i}(t,X_t^{(n)})
+\int_{T_{i-1}}^t \nabla u_{l,i}(s,X_s^{(n)}) \,\de B_s \nonumber\\
&+\int_{T_{i-1}}^t \nabla u_{l,i}(s,X_s^{(n)}) \cdot \left( f_l(\kappa^\tau_n(s), X_{\kappa_n(s)}^{(n)})-f_l(s, X_s^{(n)}) \right) \,\de s.
\end{align}
We adopt the notation from \cite{Pamen2017} that \begin{align*}
X(t):=\begin{bmatrix}
X_t^1 \\
X_t^2 \\
\vdots \\
X_t^d
\end{bmatrix}
\text{ and }
X_t^{(n)}:=\begin{bmatrix}
X_t^{(n,1)} \\
X_t^{(n,2)} \\
\vdots \\
X_t^{(n,d)}
\end{bmatrix}.
\end{align*}
 It follows from \eqref{eqn:XTi} and \eqref{eqn:XnTi} that for any $l=1,\ldots, d$,
\begin{align}\label{eqn:main1bigdecom1}
\begin{split}
    &X_t^l-X_t^{(n,l)}\\
&=X_{T_{i-1}}^l-X_{T_{i-1}}^{(n,l)}+\int_{T_{i-1}}^t \left( f_l(s,X_s)-
f_l(\kappa^\tau_n(s),X_{\kappa_n(s)}^{(n)})\right)\,\de s\\
&=X_{T_{i-1}}^l-X_{T_{i-1}}^{(n,l)}\\
&\quad+\left( u_{l,i}(T_{i-1},X_{T_{i-1}})-u_{l,i}(T_{i-1},X_{T_{i-1}}^{(n)}) \right)
- \left( u_{l,i}(t,X_{t})-u_{l,i}(t,X_t^{(n)}) \right) \\
&\quad+\int_{T_{i-1}}^t \left( \nabla u_{l,i}(s, X_s) - \nabla u_{l,i}(s,X_s^{(n)}) \right) \,\de B_s\\
&\quad+\int_{T_{i-1}}^t \nabla u_{l,i}(s,X^{\n}_{s}) \cdot  \left(f(s, X_{s}^{(n)})- f(\kappa^\tau_n(s), X_{\kappa_n(s)}^{(n)}) \right) \,\de s\\
&\quad+\int_{T_{i-1}}^t \left( f_l(s, X_s^{(n)}) - f_l(\kappa^\tau_n(s), X_{\kappa_n(s)}^{(n)}) \right)\,\de s.
\end{split}
\end{align}

Since $\|\nabla u_{l,i}\|_{C_b^{\beta}([T_{i-1},T_i])} \leq \varepsilon$, following a similar argument in \cite{Pamen2017}, we have
\begin{align*}
&|X_t^l-X_t^{(n,l)}|\\
&\leq
(1+\varepsilon) \left|X_{T_{i-1}}-X_{T_{i-1}}^{(n)}\right|
+ \varepsilon \left|X_{t}-X_{t}^{(n)}\right|\\
&\quad
+\left| \int_{T_{i-1}}^t \left( \nabla u_{l,i}(s, X_s) - \nabla u_{l,i}(s,X_s^{(n)}) \right) \,\de B_s \right| \\
&\quad+\left|\int_{T_{i-1}}^t \nabla u_{l,i}(s,X^{\n}_{s}) \cdot  \left(f(s, X_{s}^{(n)})- f(\kappa^\tau_n(s), X_{\kappa_n(s)}^{(n)}) \right) \,\de s\right|\\
&\quad+\left|\int_{T_{i-1}}^t \left( f_l(s, X_s^{(n)}) - f_l(\kappa^\tau_n(s), X_{\kappa_n(s)}^{(n)}) \right)\,\de s\right|.
\end{align*}
For $p \geq 2$, using Jensen's and H\"older inequalities, we have
\begin{align}\label{eqn:main1diff}
\begin{split}
    &\left|X_t-X_t^{(n)}\right|^p\\
&\leq
d^{p/2} 5^{p-1} (1+\varepsilon)^p \left|X_{T_{i-1}}-X_{T_{i-1}}^{(n)}\right|^p
+ d^{p/2} 5^{p-1} \varepsilon^p \left|X_{t}-X_{t}^{(n)}\right|^p \\
&\quad+ d^{p/2-1} 5^{p-1} \sum_{l=1}^d \left| \int_{T_{i-1}}^t \left( \nabla u_{l,i}(s, X_s) - \nabla u_{l,i}(s,X_s^{(n)}) \right) \,\de B_s \right|^p \\
&\quad+ d^{3p/2-1} 5^{p-1}  \sum_{l=1}^d\left|\int_{T_{i-1}}^t \nabla u_{l,i}(s,X^{\n}_{s}) \cdot  \left(f(s, X_{s}^{(n)})- f(\kappa^\tau_n(s), X_{\kappa_n(s)}^{(n)}) \right) \,\de s\right|^p \\
&\quad+  d^{3p/2-1} 5^{p-1}  \sum_{l=1}^d\left|\int_{T_{i-1}}^t \left( f_l(s, X_s^{(n)}) - f_l(\kappa^\tau_n(s), X_{\kappa_n(s)}^{(n)}) \right)\,\de s\right|^p.
\end{split}
\end{align}
Since $\varepsilon >0$ is arbitrary, let us fix $\varepsilon$ such that $c(p,d,\varepsilon):=d^{p/2-1} 5^{p-1} \varepsilon^p<1$. 

For the stochastic integral term, for any $t\in [T_{i-1},T_i]$, we have from Burkholder-Davis-Gundy's inequality that
\begin{align}\label{eqn:main1estimate1}
    \begin{split}
&         \sum_{i=1}^d \E\left[\sup_{T_{i-1}\leq v\leq t}\left| \int_{T_{i-1}}^v \left( \nabla u_{l,i}(s, X_s) - \nabla u_{l,i}(s,X_s^{(n)}) \right) \,\de B_s \right|^p\right]\\
 &\leq   C(p,d) T^{\frac{p}{2}-1} \sum_{i=1}^d \int_{T_{i-1}}^t \E\left[  \left| \nabla u_{l,i}(s, X_s) - \nabla u_{l,i}(s,X_s^{(n)}) \right|^p \right]\,\de s,
    \end{split}
\end{align}
where $C(p,d)$ is the constant from Burkholder-Davis-Gundy's inequality.

Now we will make use of Corollary \ref{cor:qbsup} to quantify the last two terms. For the last term, as $f_l\in \Cb([0,1]\times \R^d)$, for any $\epsilon\in (0,1/2)$ there exists a constant $\bar{C}(p, d,\beta,\epsilon)$ that depends on $p$, $\beta$ and $\epsilon$ such as
 \begin{align}\label{eqn:main2term2}
    \begin{split}
             &\E\left[\sup_{T_{i-1}\leq u\leq t}\left|\int_{T_{i-1}}^t \left( f_l(s, X_s^{(n)}) - f_l(\kappa^\tau_n(s), X_{\kappa_n(s)}^{(n)}) \right)\,\de s\right|^p\right]  \\
             &\leq \bar{C}(p, d,\beta,\epsilon) \|f\|_{\Cb}^p n^{-(1/2+\gamma-\epsilon)p}.
    \end{split}
\end{align}
From Lemma \ref{lem:pdeestimate} we know that $\partial u_{l,i} /\partial x^k \in C([0,1];C^1_b(\R^d))$ with $\|\partial u_{l,i} /\partial x^k \|_{C^1_b([0,1])}\leq M\epsilon$. Thus applying Corollary \ref{cor:qbsup} to $f_k\in \Cb([0,1]\times \R^d)$ and $\partial u_{l,i}/\partial x^k\in C([0,1];C^1_b(\R^d))$ yields

\begin{align}\label{eqn:main2term1}
    \begin{split}
             &\E\left[\sup_{T_{i-1}\leq u\leq t}\left|\int_{T_{i-1}}^u \nabla u_{l,i}(s,X^{\n}_{s}) \cdot  \left(f(s, X_{s}^{(n)})- f(\kappa^\tau_n(s), X_{\kappa_n(s)}^{(n)}) \right) \,\de s\right|^p\right]  \\
            &\leq \sum_{k=1}^d d^{p-1} \E\left[\sup_{T_{i-1}\leq u\leq t}\left|\int_{T_{i-1}}^u  \frac{\partial u_{l,i}(s,X^{\n}_{s})}{\partial x^k}  \left(f_k(s, X_{s}^{(n)})- f_k(\kappa^\tau_n(s), X_{\kappa_n(s)}^{(n)}) \right) \,\de s\right|^p\right]  \\
             &\leq \bar{C}(p, d,\beta,\epsilon) d^p \varepsilon^p M^p \|f\|_{\Cb}^p n^{-(1/2+\gamma-\epsilon)p},
    \end{split}
\end{align}
for any $\epsilon\in (0,1/2)$.

 Taking the supremum and then expectation on both sides of \eqref{eqn:main1diff}, the contrain on $\varepsilon$, we have from estimate \eqref{eqn:main1estimate1} to \eqref{eqn:main2term1} that
\begin{align*}
&\E\left[\sup_{T_{i-1} \leq u \leq t}\left|X_u-X_u^{(n)} \right|^p \right]\\
&\leq
\frac{d^{p/2} 5^{p-1} (1+\varepsilon)^p}{(1-c(p,d,\varepsilon))} \E\left[\left|X_{T_{i-1}}-X_{T_{i-1}}^{(n)}\right|^p \right]  \\
&+ \frac{d^{p/2-1} 5^{p-1} C(p,d)  }{(1-c(p,d,\varepsilon))} \sum_{i=1}^d \int_{T_{i-1}}^t \E\left[  \left| \nabla u_{l,i}(s, X_s) - \nabla u_{l,i}(s,X_s^{(n)}) \right|^p \right]\,\de s \\
&+ \frac{d^{3p/2-1} 5^{p-1}\bar{C}(p, d,\beta,\epsilon) (1+d^p \varepsilon^p M^p)\  \|f\|_{\Cb}^p d^p \varepsilon^p M^p }{(1-c(p,d,\varepsilon))} \\
&\leq C_1 \E\left[\left|X_{T_{i-1}}-X_{T_{i-1}}^{(n)}\right|^p \right]
+C_2 \int_{T_{i-1}}^t \E\left[ \sup_{T_{i-1} \leq u \leq s }\left|X_u-X_u^{(n)} \right|^p \right]\,\de s
+\frac{C_3}{n^{p (1/2+\gamma-\epsilon)}}.
\end{align*}
The remaining part follows exactly the same as in the proof of Theorem 2.11 of \cite{Pamen2017}. 
\end{proof}

\begin{rmk}\label{rmk:optimal}
    Note that the optimal order of convergence for a strong approximation of SDE \eqref{eq:SDE} is $1/2+\gamma$. This can be easily derived from two facts: first, the optimal order of convergence for a strong approximation of additive SDE with $f\in C^\beta_b(\R^d;\R^d)$ is shown to be $1/2+\beta/2$; secondly,  in the ODE case, the maximum order of convergence of randomized algorithms is
known to be equal to $1/2+\alpha$
under the assumption that $f\in \mathcal{C}^{\alpha,1}$, see \cite{Heinrich2008}.
\end{rmk}

 \section*{Acknowledgments}
YW would like to acknowledge the support of the Royal Society through the International Exchanges scheme IES\textbackslash R3\textbackslash 233115.
\appendix

\section{Useful estimate from heat kernel \cite{Butkovsky2021}}
Let $p_t$, $t>0$, be the density of a $d$-dimensional vector with independent Gaussian components each of mean zero and variance $t$:
\begin{equation}\label{eq:p-def}
p_t(x)=\frac{1}{(2\pi t)^{d/2}}\exp\Bigl(-\frac{|x|^2}{2t}\Bigr),\quad x\in\R^d.
\end{equation}
For a measurable function $g\colon\R\times\R^d\to\R$ we write $\cP_t g(r,\cdot):=p_t\ast g(r,\cdot)$ for $r\in \R$, and occasionally we denote by $p_0$ the Dirac delta function. The first statement provides a number of technical bounds related to the Brownian motion.
\begin{prop}\cite{Butkovsky2021}\label{prop:fractional}
Let $p\ge1$. The $d$-dimensional process $B$ has the following properties:
\begin{enumerate}
\item $\|B_t-B_s\|_{L^p(\Omega; \R^d)}= \bar{C}(p,d) |t-s|^{1/2}$, for all $0\leq s\leq t\leq 1$;
\item $\E^s [g(r,B_t)]=\cP_{|s-t|}g(r,\E^s[B_t])$, for all $0\leq s\leq t\leq 1$ and $r\in \R$;
\item $\|\E^s[B_t]-\E^s[B_u]\|_{L^p(\Omega; \R^d)}\leq \bar{C}(p,d)|t-u||t-s|^{-1/2}$, for all $0\leq s\leq u\leq t$ such that $|t-u|\le |u-s|$;
\end{enumerate}
\end{prop}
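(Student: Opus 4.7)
The plan is to verify the three items in succession by exploiting the Gaussian distribution, independent-increments property, and martingale property of the $d$-dimensional Brownian motion $B$. None of the three statements involves the discretisation or randomisation, so I expect the argument to be essentially a bookkeeping exercise with the heat kernel $p_t$ defined in \eqref{eq:p-def}.

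For item (1), since $B_t-B_s\sim N(0,(t-s)I_d)$, I would write $B_t-B_s\stackrel{d}{=}|t-s|^{1/2}Z$ with $Z\sim N(0,I_d)$ and define $\bar{C}(p,d):=\|Z\|_{L^p(\Omega;\R^d)}$, a finite constant depending only on $p$ and $d$ by a standard Gaussian moment computation. Taking $L^p(\Omega;\R^d)$-norms on both sides then yields the stated equality. For item (2), I would decompose $B_t=B_s+(B_t-B_s)$ and use that $B_t-B_s$ is independent of $\F^B_s$ with density $p_{|t-s|}$. The independence lemma then gives
\begin{equation*}
\E^s[g(r,B_t)]=\int_{\R^d} g(r,B_s+y)\, p_{|t-s|}(y)\diff{y}=(p_{|t-s|}\ast g(r,\cdot))(B_s)=\cP_{|t-s|}g(r,B_s),
\end{equation*}
and the martingale property $\E^s[B_t]=B_s$ for $t\ge s$ identifies the argument on the right with $\E^s[B_t]$, delivering the claimed identity.

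For item (3), the martingale property of $B$ directly gives $\E^s[B_t]=B_s=\E^s[B_u]$ whenever $0\le s\le u\le t$, so the left-hand side vanishes identically in $L^p(\Omega;\R^d)$ and the stated upper bound holds trivially. I therefore do not anticipate any genuine obstacle; the whole verification reduces to a change of variables against the heat kernel, together with the definition of the Gaussian moment constant $\bar{C}(p,d)$. The slightly odd-looking majorant $|t-u||t-s|^{-1/2}$ in item (3) is simply the natural bound that becomes sharp in the fractional Brownian motion generalisation alluded to in Remark 3.11, where the conditional expectation is no longer a martingale and one would replace this step by an explicit computation based on the Volterra representation of fBM.
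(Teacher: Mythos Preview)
Your proof is correct. The paper does not supply its own proof of this proposition; it is quoted from \cite{Butkovsky2021} (where it is stated and proved in the more general setting of fractional Brownian motion), so there is no in-paper argument to compare against. Your observation that item (3) collapses to $0$ for standard Brownian motion via the martingale property $\E^s[B_t]=\E^s[B_u]=B_s$ is exactly right, and your remark that the nontrivial bound $|t-u||t-s|^{-1/2}$ only becomes operative in the fBM extension is the correct explanation for its appearance here.
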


The next statement gives the  heat kernel bounds which are necessary for the proofs of the quadratic bounds in Section \ref{sec:lemmas}. 
\begin{prop}\label{prop:HK}
Let $g\in \Cb$, $\alpha, \beta\le 1$ and $\eta \in[0,1]$. The following  hold:
\begin{enumerate}
\item
There exists $\bar{C}(d, \beta, \eta)$ such that 
\begin{equation}\label{eqn:A2single} \|\cP_tg(r,\cdot)\|_{C^\eta(\R^d; \R^d)}\le \bar{C}(d, \beta, \eta)t^{\frac{(\beta-\eta)\wedge0}{2}} \|g(r,\cdot)\|_{C^\beta(\R^d;\R^d)},
\end{equation}
for all $t,r\in(0,1]$.
\item 
For all $\delta \in (0,1]$ with $\delta\ge\frac\beta2-\frac\eta2$, there exists $\bar{C}(d, \beta, \eta, \delta)$ such that 
\begin{equation}\label{eqn:A2difference}
    \|\cP_tg(r,\cdot)-\cP_sg(r,\cdot)\|_{C^\eta(\R^d;\R^d)}\leq \bar{C}(d, \beta, \eta, \delta) \|g(r,\cdot)\|_{C^{\beta}(\R^d;\R^d)} s^{\frac\beta2-\frac\eta2-\delta}(t-s)^{\delta},
\end{equation}
for all $0\le s\leq t \le 1$ and $r\in [0,1]$.
\end{enumerate}
\end{prop}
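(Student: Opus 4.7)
Both assertions are classical heat-kernel bounds, and I would prove them by exploiting the explicit Gaussian form of $p_t$, the semigroup property, and convolution estimates.

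\textbf{Part (1).} I would split into two regimes. When $\eta \leq \beta$, integrating the Hölder bound $|g(r,x-z)-g(r,y-z)| \leq [g(r,\cdot)]_\beta |x-y|^\beta$ against $p_t(z)$ shows that $\cP_t$ is a contraction on $C^\beta$, and the $C^\eta$-norm is dominated by the $C^\beta$-norm (using $\|\cP_t g\|_\infty \leq \|g\|_\infty$ for $|x-y|\geq 1$), matching the exponent $(\beta-\eta)\wedge 0 = 0$. When $\eta > \beta$, I would use smoothing: because $\int \nabla p_t = 0$, one has $\nabla\cP_t g(r,x) = \int \nabla p_t(x-z)[g(r,z)-g(r,x)]\,\de z$; combining $|\nabla p_t(y)| \leq C t^{-1/2} p_{2t}(y)$ with $\int|y|^\beta p_{2t}(y)\,\de y \leq C t^{\beta/2}$ yields $\|\nabla \cP_t g(r,\cdot)\|_\infty \leq C [g(r,\cdot)]_\beta\, t^{(\beta-1)/2}$. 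Interpolating $|h(x)-h(y)| \leq (\|\nabla h\|_\infty |x-y|)^\lambda (2[h]_\beta|x-y|^\beta)^{1-\lambda}$ with $\lambda = (\eta-\beta)/(1-\beta)$ then produces the required $t^{(\beta-\eta)/2}$ factor in the $C^\eta$-seminorm.

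\textbf{Part (2).} The plan is to establish two endpoint bounds and cover general $\delta$ by interpolation. I would factor $\cP_t g - \cP_s g = \cP_{s/2}[(\cP_{t-s}-I)\cP_{s/2}g]$. The endpoint $\delta = \beta/2$ uses only the Hölder continuity of $g$: since $\int|y|^\beta p_{t-s}(y)\,\de y = C(t-s)^{\beta/2}$, we have $\|(\cP_{t-s}-I)g\|_\infty \leq C(t-s)^{\beta/2}[g(r,\cdot)]_\beta$, and the smoothing estimate $\|\cP_s h\|_{C^\eta} \leq C s^{-\eta/2}\|h\|_\infty$ (a direct Gaussian computation, or Part (1) with input regularity $0$) yields the bound $s^{-\eta/2}(t-s)^{\beta/2}$. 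The endpoint $\delta = 1$ uses a second-order Taylor expansion of $h := \cP_{s/2}g$, which is genuinely $C^2$ with $\|D^2 h\|_\infty \leq C s^{(\beta-2)/2}\|g(r,\cdot)\|_{C^\beta}$ (obtained by iterating Part (1)): together with the vanishing first moment of $p_{t-s}$ and $\int |y|^2 p_{t-s}(y)\,\de y = d(t-s)$, this gives $\|(\cP_{t-s}-I)h\|_\infty \leq C(t-s) s^{(\beta-2)/2}\|g(r,\cdot)\|_{C^\beta}$, and one further application of the smoothing estimate yields the target bound $s^{(\beta-\eta-2)/2}(t-s)$. Intermediate $\delta \in (\beta/2, 1)$ is then covered by the elementary $\min(A,B) \leq A^\lambda B^{1-\lambda}$; the range $\delta \in [((\beta-\eta)\vee 0)/2, \beta/2]$ (nonempty only when $\eta < \beta$) is covered by the real-interpolation inequality $\|f\|_{C^\eta} \leq C\|f\|_\infty^{1-\eta/\beta}\|f\|_{C^\beta}^{\eta/\beta}$ applied to $\cP_t g - \cP_s g$ with anchors $\|\cP_t g - \cP_s g\|_\infty \leq C(t-s)^{\beta/2}\|g\|_{C^\beta}$ and $\|\cP_t g - \cP_s g\|_{C^\beta} \leq 2\|g\|_{C^\beta}$.

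\textbf{Main obstacle.} The bookkeeping of exponents in Part (2) is the delicate point: one must verify that the hypothesis $\delta \geq \beta/2-\eta/2$ precisely corresponds to keeping the $s$-exponent $\beta/2-\eta/2-\delta$ non-positive across all endpoint estimates, and that the splitting $\cP_s = \cP_{s/2}\cP_{s/2}$ is applied consistently so that the Taylor expansion for $\delta=1$ is really carried out on a $C^2$ function rather than on the merely $\beta$-Hölder $g$.
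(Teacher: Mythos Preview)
The paper does not supply its own proof of this proposition: it is stated in Appendix~A under the heading ``Useful estimate from heat kernel \cite{Butkovsky2021}'' and is quoted directly from that reference, so there is no argument in the paper to compare against. Your proposal is a correct and standard route to these bounds---smoothing of the Gaussian semigroup for Part~(1), and the factorisation $\cP_t-\cP_s=\cP_{s/2}(\cP_{t-s}-I)\cP_{s/2}$ together with endpoint-plus-interpolation for Part~(2)---and is essentially how such estimates are derived in the literature (including in \cite{Butkovsky2021} itself).

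One small point: your final sentence for Part~(2) claims that real interpolation between $\|\cP_tg-\cP_sg\|_\infty$ and $\|\cP_tg-\cP_sg\|_{C^\beta}$ covers the whole range $\delta\in[\tfrac{(\beta-\eta)\vee 0}{2},\tfrac\beta2]$, but in fact that interpolation only produces the single value $\delta=\tfrac{\beta-\eta}{2}$ (with $s$-exponent zero). To cover the intermediate $\delta$ in that range you still need the $\min(A,B)\le A^\lambda B^{1-\lambda}$ trick between this new endpoint and the $\delta=\beta/2$ endpoint you already established; this is routine, but worth stating explicitly so the bookkeeping you flag as the main obstacle is airtight.
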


\section{An alternative proof of Lemma \ref{lem:qb3_discrete}}\label{sec:proof}
\begin{proof}[The proof of Lemma \ref{lem:qb3_discrete}]
 When $t-s < n^{-1}$, it is easy to get
\begin{align*}
  &\Bigl\|\int_s^t \big(g_1(r,B_{\kappa_n(r)})-g_1(\kappa_n^\tau(r),B_{\kappa_n(r)})\big)g_2(\kappa_n(r),B_{\kappa_n(r)})\, \de r\Bigr\|_{L^p(\Omega)}\\
  &\leq   \|g_1\|_{\mathcal{C}^{\alpha,\beta}_b}\|g_2\|_{\infty} |t-s|n^{-\alpha}\leq \|g_1\|_{\mathcal{C}^{\alpha,\beta}_b}\|g_2\|_{\infty} |t-s|^{1/2+\epsilon}n^{-(1/2+\alpha-\epsilon)}. 
\end{align*}

When $n^{-1}\leq t-s$, for simplicity, take $s=k_1/n$ and $t=k_2/n$, with $k_1\leq k_2$ and $k_1, k_2\in \N$ . It suffices to show that 
\begin{align}\label{eqn:errRiediscrete}
         \begin{split}
              &  \Big\| \sup_{k\in \{k_1,\ldots,k_2\}} \Big|
      \int_{t_{k_1}}^{t_k}   \left( g_1(r, B_{\kappa_n(r)})-g_1(\kappa^\tau_n(r), B_{\kappa_n(r)}) \right)g_2(\kappa_n(r),B_{\kappa_n(r)}) \diff{r}
      \Big| \, \Big\|_{L^p( \Omega)}\\
      &\leq \bar{C}(p)\|g_1\|_{\mathcal{C}^{\alpha,\beta}_b}\|g_2\|_{\infty}n^{-(\frac{1}{2} + \alpha)} \sqrt{t-s}
      .
         \end{split}
     \end{align}
Eqn. \eqref{eqn:errRie} can be derived immediately given estimate \eqref{eqn:errRiediscrete}.
  
  Let us therefore fix an arbitrary realization $\omega \in \OB$.
  Then for every $k \in \{k_1+1,\ldots,k_2\}$ we obtain
  \begin{align}\label{eqn:conditionalexp}
  \begin{split}
         & \E_\tau \left [\int_{t_{k-1}}^{t_k}   g_1(\kappa^\tau_n(r), B_{\kappa_n(r)})\,g_2(\kappa_n(r),B_{\kappa_n(r)}) \diff{r} \Big|\mathcal{F}^\tau_{k-1} \right]  \\
    &=\E_\tau \left [n^{-1} g_1\!\left(t_{k-1}+ \tau_k n^{-1}, B_{t_{k-1}}(\omega)\right)\, g_2\!\left(t_{k-1},B_{t_{k-1}}(\omega)\right)  \Big|\mathcal{F}^\tau_{k-1} \right] \\
      & =\int_0^1 n^{-1}g_1\!\left(t_{k-1}+ v n^{-1}, B_{t_{k-1}}(\omega)\right)\, g_2\!\left(t_{k-1},B_{t_{k-1}}(\omega)\right) \,\de v  \\
&    = \int_{t_{k-1}}^{t_k} g_1\!\left(r, B_{\kappa_n(r)}(\omega)\right)\,g_2\!\left(\kappa_n(r),B_{\kappa_n(r)}(\omega)\right)  \diff{r},
  \end{split}
  \end{align}
  due to $\tau_k \sim \mathcal{U}(0,1)$ and independent of $\mathcal{F}^\tau_{k-1} $. 
  
  Next, define filtration $\mathcal{G}^n_{\bar{n}}:=\F_{\bar{n}+k_1}^n$ for $\bar{n} \in \{1,\ldots,k_2-k_1\}$, and define a discrete-time
  error process $(E^{\bar{n}})_{\bar{n} \in \{0,1,\ldots,k_2-k_1\}}$ by setting $E^0 \equiv 0$ and for $\bar{n} \in \{1,\ldots,k_2-k_1\}$ setting
  \begin{align*}
          E^{\bar{n}} := 
 \int_{t_{k_1}}^{t_{k_1+\bar{n}}}   \left( g_1(r, B_{\kappa_n(r)})-g_1(\kappa^\tau_n(r), B_{\kappa_n(r)}) \right) g_2(\kappa_n(r),B_{\kappa_n(r)}) \diff{r},
  \end{align*}
  which is evidently an real-valued random variable on the product
  probability space $(\Omega, \F, \P)$. In particular, $(E^{\bar{n}})_{\bar{n} \in
  \{0,1,\ldots,k_2-k_1\}} \subset L^p(\Omega)$. Moreover,
  for each fixed $\omega \in \OB$ we have
  that $E^{\bar{n}}(\omega,\cdot) \colon \Omega_\tau \to \R$ is
  $\mathcal{G}_{\bar{n}}^\tau$-measurable. Further, for
   $\bar{n}\in \{0,1,\ldots,k_2-k_1-1\}$, it holds true that
  \begin{align*}
    &\E_\tau[ E^{\bar{n}+1}(\cdot,\omega) | \mathcal{G}^{\tau}_{\bar{n}}]=
    E^{\bar{n}}(\cdot,\omega) 
  \end{align*}
  because of Eqn. \eqref{eqn:conditionalexp}.
  
  Consequently, for every $\omega \in \OB$ 
  the error process $(E^{\bar{n}}(\cdot,\omega))_{\bar{n} \in
  \{0,1,\ldots,k_2-k_1\}}$ is an $(\mathcal{G}^\tau_{\bar{n}})_{\bar{n} \in
  \{0,1,\ldots,k_2-k_1\}}$-adapted $L^p(\Omega_\tau;\R^d)$-martingale.
  Thus,  the discrete-time version of the Burkholder-Davis-Gundy inequality 
  (see Theorem \ref{th:discreteBDG}) is applicable and yields  
  \begin{align*}
    \big\| \max_{\bar{n} \in
  \{0,1,\ldots,k_2-k_1\}} | E^{\bar{n}}(\cdot,\omega)|
    \big\|_{L^p(\Omega_\tau)}
    \le C_p \big\| [E(\cdot,\omega)]^{\frac{1}{2}}_{k_2-k_1}
    \big\|_{L^p(\Omega_\tau)}
    \quad \text{ for every } \omega \in \OB.
  \end{align*}
  After inserting the quadratic variation $[E(\omega,\cdot)]_{k_2-k_1}$, taking the
  $p$-th power and integrating with respect to $\P_W$ we arrive at
  \begin{align*}
      &\big\| \max_{\bar{n} \in
  \{0,1,\ldots,k_2-k_1\}} | E^{\bar{n}}|\big\|_{L^p(\Omega)}^p\\
  & = \int_{\Omega_W} \big\| \max_{\bar{n} \in
  \{0,1,\ldots,k_2-k_1\}} | E^{\bar{n}}(\cdot,\omega)| \big\|_{L^p(\Omega_\tau)}^p \diff{\P}_W(\omega) \\
            & \le C_p^p \int_{\Omega_W} \Big\| \Big(\sum_{k = k_1+1}^{k_2}
      \Big| \int_{t_{k-1}}^{t_k}  \Big( g_1(r, B_{\kappa_n(r)})-g_1(\kappa^\tau_n(r), B_{\kappa_n(r)}) \Big)\\
      & \qquad\qquad \times g_2(\kappa_n(r),B_{\kappa_n(r)})\diff{r}  \Big|^2 \Big)^{\frac{1}{2}} 
      \Big\|_{L^p(\Omega_\tau)}^p \diff{\P}_W(\omega)\\     
      & \leq C_p^p\|g_1\|_{\mathcal{C}^{\alpha,\beta}_b}^p\|g_2\|_{\infty}^p \Big(\sum_{k = k_1+1}^{k_2} n^{-2} n^{-2\alpha}  \Big)^{p/2}\\
      &\leq C_p^p\|g_1\|_{\mathcal{C}^{\alpha,\beta}_b}^p\|g_2\|_{\infty}^p (t-s)^{p/2} n^{-(1/2+\alpha)p}. 
  \end{align*}
  For an arbitrary interval $[s,t]$ satisfying $n^{-1}\leq |t-s|$, Eqn. \eqref{eqn:errRie} follows by interpolation.
\end{proof}


\begin{thebibliography}{99}


\bibitem{Biswas2024}
Biswas, S., Kumar, C., Neelima, Reis, G.D. and Reisinger, C.:
\newblock An explicit Milstein-type scheme for interacting particle systems and McKean-Vlasov SDEs with common noise and non-differentiable drift coefficients,
\newblock {\em The Annals of Applied Probability},
34(2), pp.2326-2363,
(2024).

\bibitem{burkholder1966}
Burkholder, D.L.:
\newblock Martingale transforms,
\newblock {\em The Annals of Mathematical Statistics},
37(6), pp.1494-1504,
(1966).

\bibitem{Butkovsky2021}
Butkovsky, O., Dareiotis, K. and Gerencs\'er, M.:
\newblock Approximation of SDEs: a stochastic sewing approach,
\newblock {\em Probability theory and related fields},
181, 975-1034,
(2021).


\bibitem{Davie2007}
Davie, A.M.:
\newblock Uniqueness of solutions of stochastic differential equations,
\newblock {\em International Mathematics Research Notices},  p.rnm124 (2007).

\bibitem{Dareiotis2023}
Dareiotis, K., Gerencs\'er, M. and L\^e, K.:
\newblock Quantifying a convergence theorem of Gy\"ongy and Krylov,
\newblock {\em The Annals of Applied Probability},
33(3), pp.2291-2323, (2023).

\bibitem{Daun2011}
Daun, T.:
\newblock On the randomized solution of initial value problems,
\newblock {\em Journal of Complexity},
 27(3-4), pp.300-311,
(2011).

\bibitem{ellinger2025}
Ellinger, S., M\"uller-Gronbach, T. and Yaroslavtseva, L.:
\newblock Sharp lower error bounds for pathwise approximation of SDEs with a H\"older drift coefficient,
\newblock {\em In work progress}, (2025).

\bibitem{Gerencser2023}
Gerencs\'er, M., Lampl, G. and Ling, C.:
\newblock The Milstein scheme for singular SDEs with H\" older continuous drift,
\newblock {\em  arXiv preprint },
arXiv:2305.16004.

\bibitem{Goudenege2023}
Gouden\`ege, L., Haress, E.M. and Richard, A.:
\newblock Numerical approximation of SDEs with fractional noise and distributional drift,
\newblock {\em arXiv preprint},
arXiv:2302.11455.



\bibitem{Gyongy1996}
Gy\"ongy, I. and Krylov, N.:
\newblock Existence of strong solutions for It\^o's stochastic equations via approximations,
\newblock {\em Probability theory and related fields},
105(2), pp.143-158, (1996).

\bibitem{Heinrich2008}
Heinrich, S. and Milla, B.:
\newblock The randomized complexity of initial value problems,
\newblock {\em Journal of Complexity},
24(2), pp.77-88 (2008).

\bibitem{Kruse2017}
Kruse, R. and Wu, Y.:
\newblock  Error analysis of randomized Runge-Kutta methods for differential equations with time-irregular coefficients,
\newblock {\em Computational Methods in Applied Mathematics},
17(3), pp.479-498 (2017).


\bibitem{Kruse2019a}
Kruse, R. and Wu, Y.:
\newblock   A randomized Milstein method for stochastic differential equations with non-differentiable drift coefficients,
\newblock {\em Discrete \& Continuous Dynamical Systems-Series B},
24(8), pp.479-498 (2019).

\bibitem{Le2020}
L\^e, K.:
\newblock A stochastic sewing lemma and applications,
\newblock {\em Electronic Journal of Probability},
25(38), 1-55, (2020).



\bibitem{Pamen2017}
Pamen, O.M. and Taguchi, D.:
\newblock Strong rate of convergence for the Euler-Maruyama approximation of SDEs with H\"older continuous drift coefficient,
\newblock {\em Stochastic Processes and their Applications},
127(8), 2542-2559,
(2017).



\bibitem{Przybylowicz2024}
Przyby{\l}owicz, P., Schwarz, V. and Sz\"olgyenyi, M.:
\newblock Randomized Milstein algorithm for approximation of solutions of jump-diffusion SDEs,
\newblock {\em Journal of Computational and Applied Mathematics},
440, p.115631,
(2024).

\bibitem{Przybylowicz2024b}
Przyby{\l}owicz, P., Wu, Y. and Xie, X.:
\newblock On approximation of solutions of stochastic delay differential equations via randomized Euler scheme,
\newblock {\em Applied Numerical Mathematics},
197, pp.143-163,
(2024).

\bibitem{Neuenkirch2021}
Neuenkirch, A. and Sz\"olgyenyi, M.:
\newblock The Euler-Maruyama scheme for SDEs with irregular drift: convergence rates via reduction to a quadrature problem,
\newblock {\em IMA Journal of Numerical Analysis}, 41(2), pp.1164-1196.

\bibitem{Veretennikov1981}
Veretennikov, A. J.:
\newblock On strong solutions and explicit formulas for solutions of stochastic integral equations,
\newblock {\em Mathematics of the USSR-Sbornik},
39(3), p.387 (1981).
 
\bibitem{Zvonkin1974}
Zvonkin, A.K.:
\newblock A transformation of the phase space of a diffusion process that removes the drift,
\newblock {\em Mathematics of the USSR-Sbornik},
 22(1), p.129 (1974).
\end{thebibliography}
\end{document}